\newtheorem{theo}{Theorem}[section]
\newtheorem{lemma}[theo]{Lemma}
\theoremstyle{definition}
\newtheorem{rem}[theo]{Remark}
\theoremstyle{plain}
\newtheorem{conjecture}[theo]{Conjecture}
\theoremstyle{definition}
\newtheorem{defi}[theo]{Definition}
\theoremstyle{remark}
\numberwithin{paragraph}{section}
\numberwithin{equation}{section}
\def\P{{\mathbb P}}
\def\N{{\mathbb N}}
\def\Z{{\mathbb Z}}
\def\Q{{\mathbb Q}}
\def\R{{\mathbb R}}
\def\KO{{\mathcal O}}
\def\KZ{{\mathcal Z}}
\def\SA{{\mathscr A}}
\def\SC{{\mathscr C}}
\def\SD{{\mathscr D}}
\def\SF{{\mathscr F}}
\def\SH{{\mathscr H}}
\def\SJ{{\mathscr J}}
\def\SL{{\mathscr L}}
\def\SM{{\mathscr M}}
\def\SN{{\mathscr N}}
\def\SO{{\mathscr O}}
\def\SU{{\mathscr U}}
\def\SX{{\mathscr X}}
\def\SY{{\mathscr Y}}
\def\Fm{{\mathfrak m}}
\def\Fa{{\mathfrak a}}
\def\Fb{{\mathfrak b}}
\def\BFB{{\mathbf B}}
\def\kcirc{{K^\circ}}
\def\an{{\rm an}}
\def\Spec{{\rm Spec}}
\def\Xan{{X^\mathrm{an}}}
\def\MA{{\rm MA}}
\newcommand{\metr}{{\|\hspace{1ex}\|}}
\title[Monge--Amp\`ere in mixed characteristic]{On the non-Archimedean Monge--Amp\`ere equation in mixed characteristic}
\author[Y.~Fang]{Yanbo Fang}
\address{Y. Fang, Mathematik, Universit{\"a}t 
Regensburg, 93040 Regensburg, Germany}
\email{yanbo.fang@mathematik.uni-regensburg.de}
\author[W.~Gubler]{Walter Gubler}
\address{W. Gubler, Mathematik, Universit{\"a}t 
Regensburg, 93040 Regensburg, Germany}
\email{walter.gubler@mathematik.uni-regensburg.de}
\author[K.~K\"unnemann]{Klaus K{\"u}nnemann}
\address{K. K{\"u}nnemann, Mathematik, Universit{\"a}t 
Regensburg, 93040 Regensburg, Germany}
\email{klaus.kuennemann@mathematik.uni-regensburg.de}
\date{\today}
\thanks{Y. Fang, W.~Gubler and K.~K{\"u}nnemann 
were supported by the collaborative research 
center SFB 1085 \emph{Higher Invariants - Interactions between Arithmetic Geometry and Global Analysis} funded by the Deutsche Forschungsgemeinschaft.}
\begin{document}

\begin{abstract}
Let $X$ be a smooth projective variety over a complete discretely valued field of mixed characteristic.
We solve non-Archimedean Monge--Ampère equations on $X$ assuming resolution and embedded resolution of singularities.
We follow the variational approach of Boucksom, Favre, and Jonsson proving the continuity of the plurisubharmonic envelope of a continuous metric on an ample line bundle on $X$.
We replace the use of multiplier ideals in equicharacteristic zero by the use of 
perturbation friendly test ideals introduced by Bhatt, Ma, Patakfalvi, Schwede, Tucker, Waldron, and Witaszek 
building upon previous constructions by Hacon, Lamarche, and Schwede.

\bigskip

\noindent
MSC: Primary 14G22, Secondary 14F18, 32P05, 32U15
\end{abstract}

\maketitle
\setcounter{tocdepth}{1}

\tableofcontents

\section{Introduction}

Let $L$ be an ample line bundle on an $n$-dimensional projective variety $X$ over a non-Archimedean field $K$. The non-Archimedean analogue of the famous \emph{Calabi--Yau problem} asks for a given  Radon measure $\mu$ on the Berkovich analytification $\Xan$ with $\mu(\Xan)=\deg_L(X)$ whether there exists a continuous semipositive metric $\metr$ of $L$, unique up to scaling, such that 
\begin{equation} \label{MA equation}
c_1(L,\metr)^{\wedge n}=\mu
\end{equation}
using the non-Archimedean Monge--Amp\`ere measure introduced by Chambert-Loir on the left. 
We call  \eqref{MA equation} the \emph{non-Archimedean Monge--Amp\`ere equation}. The analogous problem over the complex numbers was solved by Yau for Radon measures given by smooth volume forms within the class of smooth metrics. Uniqueness was shown before by Calabi. Later Kołodziej used pluripotential theory to treat more singular measures and solutions \cite{kol98:_monge_amper}.

Yuan and Zhang proved uniqueness up to scaling for solutions of the non-archime\-dean Monge--Amp\`ere equation \cite{yuan-zhang}.  
In a groundbreaking work, Bouck\-som, Favre and Jonsson solved the non-Archimedean Monge--Amp\`ere equation over a complete discretely valued field $K$ of \emph{residue characteristic zero} assuming that $X$ is smooth and that the support of the Radon measure $\mu$ is contained in the skeleton of an SNC-model (a projective regular model with special fiber having simple normal crossing support) \cite{bfj-solution}. 
They assumed also that $K$ is a completion of the function field of a curve at a closed point. 
This geometric condition was later removed by Burgos, Jell, Martin and the last two authors of this paper \cite{BGJKM}. 
Boucksom, Favre, and Jonsson used a variational approach 
to solve \eqref{MA equation} which relies crucially on the continuity of the semipositive envelope of a continuous metric of $L$. 
They show continuity of this envelope by using multiplier ideals on SNC-models \cite{bfj-singular}. 
In their arguments, Hironaka's resolution of singularities plays an important role in order to have sufficiently many SNC-models at hand. 
Apart from the multiplier ideals,  it is precisely here where  residue characteristic zero is used again.

In equicharacteristic $p>0$, the following existence result was shown by Jell, Martin and the last two authors of this paper \cite{gubler-jell-kuennemann-martin}. 
Similarly as above, it is assumed that $K$ is a completion of the function field of a curve over a perfect field $k$ of characteristic $p>0$. 
It is also assumed that resolution of singularities and embedded resolution of singularities hold in dimension $n+1$, see \S \ref{resolution of singularities} for precise definitions.  
Then existence of a solution of \eqref{MA equation} is shown in \cite{gubler-jell-kuennemann-martin} if  the support of $\mu$ is contained in the skeleton of an SNC-model. 
The proof is along the same lines as in \cite{bfj-singular,bfj-solution} replacing multiplier ideals by test ideals. 
Note that this result is unconditional for $n=2$ by using resolution of singularities for three-folds proved by Cossart and Piltant.

If $K$ has mixed characteristic, then there are results about the non-archime\-de\-an Monge--Ampère problem for varieties like curves, abelian varieties and toric varieties based on their special geometry \cite{thuillier-thesis,liu2011, burgos-gubler-jell-kuennemann2021}.

In this paper, we deal with the Monge--Ampère problem for arbitrary smooth projective varieties over a complete discretely valued field $K$ of mixed characteristic $(0,p)$. 
For a complete local noetherian domain $R$ of mixed characteristic, a theory of test ideals was introduced by Ma and Schwede based on perfectoid ideas \cite{ma-schwede2018a, ma-schwede2018b}.
Using perfectoid methods, prismatic techniques, and a $p$-adic Riemann-Hilbert correspondence, Bhatt was able in 2020 to show a variant of Kodaira vanishing `up to finite covers' in mixed characteristic \cite{bhatt2020}. 
Applications to the minimal model program were given by Bhatt, Ma, Patakfalvi, Schwede, Tucker, Waldron, and Witaszek \cite{bmpstww2021} and independently by Takamatsu and Yoshikawa \cite{ty2021}. 
For projective normal schemes over $R$, these ideas were extended by Hacon, Lamarche and Schwede. 
They introduced \emph{$+$-test ideals} and showed global generation results \cite{hls2021}. 
Their construction, which replaces spaces of global sections by so-called spaces of $+$-stable sections from\cite{bmpstww2021,ty2021}, is recalled in \S \ref{plus-test-ideals}.
Hacon, Lamarche and Schwede conjecture that subadditivity holds for their $+$-test ideals \cite[Conjecture 8.3]{hls2021}. 
Observe that subaddivity is well known for multiplier resp.~test ideals in the equicharacteristic case. 
While this conjecture remains open, a modified version of $+$-test ideals, which we call \emph{perturbation friendly global test ideals} in this article, was introduced by Bhatt, Ma, Patakfalvi, Schwede, Tucker, Waldron, and Witaszek  \cite{schwede-etal-2024}, again benefiting from a $p$-adic Riemann-Hilbert correspondence. 
These perturbation friendly test ideals enjoy properties as nice as the $+$-test ideals of Hacon, Lamarche and Schwede, and in addition satisfy the subadditivity property. 
We refer to \S \ref{perturbation-friendly-global-test-ideals} for details.

The contribution of this paper is to show that global $+$-test ideals and perturbation friendly global test ideals allow applications to the non-Archimedean Monge--Amp\`ere problem in mixed characteristic. 
More precisely, we prove the following results.

\newpage

\begin{theo} \label{intro: continuity of the envelope}
Let $L$ be an ample line bundle on a smooth projective variety $X$ over $K$. 
\begin{enumerate}
\item
If $\metr$ is a model metric on $L$ induced by a model $(\SX,\SL)$ of $(X,L)$ with $\SX$ regular
and if $L$ has an ample model $\mathscr A$ on $\SX$, then the semipositive envelope of the metric $\metr$ is continuous. 
\item
If resolution of singularities holds for  projective $\kcirc$-models of $X$, then the semipositive envelope of any continuous metric $\metr$ of $L$ is continuous. 
\end{enumerate}
\end{theo}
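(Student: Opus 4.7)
The plan is to adapt the variational strategy of Boucksom--Favre--Jonsson \cite{bfj-singular,bfj-solution} to mixed characteristic, replacing multiplier ideals by the perturbation friendly global test ideals of \cite{schwede-etal-2024}. Two analytic properties of these ideals drive the argument: a Skoda-type global generation result, and subadditivity. Global generation is already available for the $+$-test ideals of \cite{hls2021}, but subadditivity is the reason the perturbation friendly refinement is needed.

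For part (i), fix the regular model $\SX$, the model $\SL$ inducing $\metr$, and the ample model $\mathscr A$ of $L$ on $\SX$. For each integer $m\geq 1$ I associate a perturbation friendly global test ideal $\tau_m$ on $\SX$ built from the $\Q$-divisor $\frac{1}{m}\SL^{\otimes m}$ (with an auxiliary twist by $\mathscr A$ in a controlled way). By the global generation theorem there is a constant $c$, independent of $m$, such that $\tau_m \otimes \SL^{\otimes m} \otimes \mathscr A^{\otimes c}$ is generated by its $+$-stable global sections on $\SX$. These sections define a Fubini--Study type continuous semipositive metric $\metr_m$ on $L$. A direct comparison of norms of sections of $\tau_m\otimes \SL^{\otimes m}\otimes \mathscr A^{\otimes c}$ with $\metr^{\otimes m}$ yields $\metr_m\leq \metr$ up to error $O(1/m)$, so in particular $\metr_m\leq P(\metr)+O(1/m)$. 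Conversely, for any semipositive metric $\phi\leq \metr$, iterating the subadditivity relation $\tau_{mk}\subseteq \tau_m^k$ and passing to the supremum of test-ideal sections produces the reverse inequality $P(\metr)\leq \metr_m+O(\log m / m)$, in exact analogy with \cite[\S\S 5--6]{bfj-singular}. Combining both estimates gives uniform convergence $\metr_m\to P(\metr)$ on $\Xan$; since each $\metr_m$ is continuous, so is $P(\metr)$.

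For part (ii), let $\metr$ be an arbitrary continuous metric on $L$. By a standard density argument, $\metr$ is a uniform limit of $\Q$-model metrics $\metr_j$ on $L$. Using the assumed resolution of singularities for projective $\kcirc$-models of $X$, I replace each underlying model by a regular one dominating it; by pulling back a fixed ample model from some auxiliary $\SX$ along the domination maps, one arranges that $L$ admits an ample model on each of these regular models, so that part (i) applies to every $\metr_j$. The envelope operator $P$ is $1$-Lipschitz in the sup-norm on metrics, hence $\metr_j\to \metr$ uniformly forces $P(\metr_j)\to P(\metr)$ uniformly; each $P(\metr_j)$ is continuous by part (i), and a uniform limit of continuous functions on the compact space $\Xan$ is continuous.

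The main obstacle lies in part (i), in producing the upper bound $P(\metr)\leq \metr_m+O(\log m/m)$ from subadditivity. This is precisely the step where multiplier ideals are used in equicharacteristic zero; one must exploit that the perturbation friendly test ideals of \cite{schwede-etal-2024} are engineered so that this subadditivity argument carries over. The delicate point is to check that singular semipositive metrics (as opposed to divisorial data) are still captured by the $+$-stable global sections of $\tau_m\otimes\SL^{\otimes m}\otimes\mathscr A^{\otimes c}$ with loss only of the order $O(\log m/m)$, and that the auxiliary ample twist by $\mathscr A^{\otimes c}$ contributes solely to this asymptotically vanishing error rather than producing a constant shift.
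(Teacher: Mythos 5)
Your overall strategy for part (i) matches the paper's. The paper constructs, for $\SL=\KO_\SX(D)$, the asymptotic perturbation friendly test ideals $\Fb_m=\tau(\KO_\SX,m\|D\|)$ (not the ideal of the constant $\Q$-divisor $\frac{1}{m}(mD)$, which is what your phrase ``the $\Q$-divisor $\frac1m\SL^{\otimes m}$'' literally reads as -- the exponent must grow with $m$). It compares them with the base ideals $\Fa_m$ of $\SL^m$ and verifies the three BFJ properties: (a) $\Fa_m\cdot\tau(\KO_\SX,0)\subset\Fb_m$ and these are vertical for large $m$, (b) subadditivity $\Fb_{ml}\subset\Fb_m^l$, and (c) global generation of $\Fb_m\otimes\mathscr A^{\otimes m_0}\otimes\SL^{\otimes m}$. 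Then the argument of \cite[Theorem 8.5]{bfj-singular} applies verbatim. Your $O(\log m/m)$ is unnecessary: subadditivity gives the \emph{exact} inequality $\frac1m\log|\Fb_m|\ge P_\theta(0)$, and the ample twist $\mathscr A^{\otimes m_0}$ contributes an $O(1/m)$ perturbation of the class, with no logarithmic loss; but a weaker estimate would still suffice, so this is a stylistic point, not a gap.

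There is, however, a real gap in part (ii). You say ``by pulling back a fixed ample model from some auxiliary $\SX$ along the domination maps, one arranges that $L$ admits an ample model on each of these regular models.'' This fails: the pullback of an ample line bundle along a non-finite projective morphism (e.g. a resolution or blowup) is never ample. The standard fix, $\pi^*\SH^{\otimes\ell}\otimes\SO_{\SX'/\SX}(1)$, is ample for $\ell\gg 0$, but its restriction to the generic fiber is $L^{\otimes\ell}\otimes F$ where $F=\SO_{\SX'/\SX}(1)|_X$ need not be trivial, because the resolution morphism is not known to be a blowup in a vertical ideal. This is precisely what the paper's Lemma \ref{higher geometric model} is designed to handle: one first passes to a projective model dominating the given one via L\"utkebohmert's \cite[Lemma 2.2]{luetkebohmert1993}, then uses P\'epin's semi-factorial models \cite[Thm.~3.1]{pepin2013} so that $F$ extends to a line bundle $\SF$ on $\SX$, which allows replacing $\SO_{\SX'/\SX}(1)$ by $\SO_{\SX'/\SX}(1)\otimes\pi^*\SF^{-1}$ and making the ample extension an honest model of $L^{\otimes\ell}$. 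Without that step your reduction to part (i) does not go through. The remaining parts of your argument for (ii) -- approximation by model metrics, $1$-Lipschitz continuity of the envelope in sup-norm, uniform limit of continuous functions on the compact space $\Xan$ -- are fine and agree with the paper's route (the paper additionally rewrites $P_\theta(u)=P_{\theta+dd^cu}(0)+u$ to reduce to the envelope of zero, which is equivalent to applying part (i) to the model metric as you do).
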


This follows from Theorem \ref{relative Theorem 8.5} and Theorem \ref{continuity-of-the-envelope} by Remark \ref{dictionary}.
This result is the key to apply the variational method of Boucksom--Favre--Jonsson as we will see in Theorem \ref{Calabi-Yau theorem}.

\begin{theo} \label{into Calabi-Yau theorem}
Let $L$ be an ample line bundle on an $n$-dimensional smooth projective variety $X$ over $K$. 
If resolution of singularities holds for projective models of $X$ and embedded resolution of singularities holds for regular projective models of $X$, then the non-Archimedean Monge--Amp\`ere equation \eqref{MA equation} is solved by a continuous semipositive metric $\metr$ of $L$, unique up to scaling, if the positive Radon measure $\mu$ has support in the skeleton of an SNC-model of $X$.
\end{theo}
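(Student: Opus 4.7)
The plan is to follow the variational approach of Boucksom--Favre--Jonsson \cite{bfj-solution}, which reduces the existence part of the non-archimedean Monge--Amp\`ere equation to the continuity of semipositive envelopes established in Theorem~\ref{intro: continuity of the envelope}. Uniqueness up to scaling is a consequence of the Yuan--Zhang theorem \cite{yuan-zhang}, which applies in our setting without modification.

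For existence, I would introduce the Monge--Amp\`ere energy $E$ on the space of plurisubharmonic metrics on $L$ together with its finite-energy class $\mathcal{E}^1(L)$, and consider the functional $F_\mu(\phi) := E(\phi) - \int_{\Xan} \phi\, d\mu$. The goal is to produce a maximizer $\phi_\ast$ of $F_\mu$ on $\mathcal{E}^1(L)$ and to verify that any such maximizer solves \eqref{MA equation}. The central analytic input is the differentiability of $E\circ P$ along continuous directions, where $P$ denotes the semipositive envelope; concretely, for a continuous psh metric $\phi_0$ and a continuous function $f$ one needs
\[
\frac{d}{dt}\Big|_{t=0} E\bigl(P(\phi_0 + tf)\bigr) = \int_{\Xan} f\, c_1(L,\phi_0)^{\wedge n}.
\]
This identity rests on the continuity of $t \mapsto P(\phi_0 + tf)$, which is exactly what Theorem~\ref{intro: continuity of the envelope}(ii) supplies, together with the orthogonality relation $\int_{\Xan}(\phi - P\phi)\, c_1(L,P\phi)^{\wedge n} = 0$; both are obtained as in \cite[\S 7]{bfj-singular} and \cite{BGJKM}.

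The next step is to maximize $F_\mu$. One checks that $F_\mu$ is upper semicontinuous and coercive on $\mathcal{E}^1(L)$ in the strong topology, so a maximizer $\phi_\ast$ exists; the Euler--Lagrange analysis combined with the differentiability above forces $c_1(L,\phi_\ast)^{\wedge n} = \mu$. The hypothesis that the support of $\mu$ lies in the skeleton of an SNC-model enters in two complementary ways: the linear functional $\phi \mapsto \int \phi\, d\mu$ is well-behaved because it factors through a retraction onto a finite-dimensional skeleton, and continuity of the solution $\phi_\ast$ on $\Xan$ is extracted by regularizing $\mu$ along finer skeleta, solving in each step via the model case (again using Theorem~\ref{intro: continuity of the envelope}), and passing to a uniform limit.

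The main obstacle has already been overcome and is precisely Theorem~\ref{intro: continuity of the envelope}: the continuity of the envelope in residue characteristic $p>0$, where the multiplier-ideal input of \cite{bfj-singular} is replaced by the perturbation friendly global test ideals of \cite{schwede-etal-2024}. Once this is in hand, the remainder of the argument is an adaptation of \cite{bfj-solution} and \cite{BGJKM} to the mixed-characteristic setting; no further specifically mixed-characteristic input is required, which is why resolution and embedded resolution of singularities enter only through the envelope-continuity step.
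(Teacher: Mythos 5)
Your proposal follows the same route as the paper: translate to the $\theta$-psh setting, invoke Yuan--Zhang for uniqueness, and run the Boucksom--Favre--Jonsson variational argument, with Theorem~\ref{intro: continuity of the envelope} supplying continuity of the envelope and the differentiability/orthogonality property taken from \cite{BGJKM} (the paper cites \cite[Theorems~6.3.2, 6.3.3]{BGJKM} and \cite[Theorem~8.1]{bfj-solution} for exactly these two steps). The only difference is one of presentation: the paper states the argument tersely by reference, whereas you unpack the energy functional and Euler--Lagrange analysis, which is consistent with what those references prove.
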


\section{Model metrics, curvature forms and psh envelopes}

Non-Archimedean pluripotential theory in higher dimensions was introduced by Boucksom, Favre and Jonsson \cite{bfj-singular}.
We recall basic notions of non-Archimedean pluripotential theory following \cite[2.1-2.8]{gubler-jell-kuennemann-martin}.

Let $X$ be a proper variety over a non-Archimedean field $K$. 
Recall that the topological space underlying the \emph{Berkovich analytification $X^{\mathrm{an}}$ of $X$} consists of pairs $(p,|\phantom{a}|_p)$ with $p\in X$ and $|\phantom{a}|_p$ an absolute value on $\kappa(p)=\mathcal O_{X,p}/\mathfrak m_{X,p}$ that extends $|\phantom{a}|_K$ and is equipped with coarsest topology such that the map
$\pi\colon X^{\rm an}\rightarrow X$, $(p,|\phantom{a}|_p) \mapsto p$
is continuous and for all Zariski open subsets $U$ of $X$ and all regular functions $f \in \mathcal O_X(U)$ the map
$|f|\colon U^{\mathrm{an}}=\pi^{-1}(U)\rightarrow \R,\,(p,| \phantom{a}|_p)  \mapsto |f(p)| := |f+ 
\mathfrak m_{X,p}|_p$ is continuous as well.

A \emph{model} of $X$ is a proper flat scheme $\SX$ over $S\coloneqq \mathrm{Spec}~K^{\circ}$ together with an isomorphism $h$ from $X$ to the generic fiber $\SX_{\eta}$ of the $S$-scheme $\SX$. 
The special fiber of $\SX$ is denoted by $\SX_s$.
Given a model $\SX$, the valuative criterion for properness yields a natural reduction map $\mathrm{red}_\SX\colon X^\an\to \SX_s$.
Let $L$ be a line bundle over $X$.
A \emph{model} of $(X,L)$ is given by a model $\SX$ of $X$ and a line bundle $\SL$ over $\SX$ with an isomorphism of $L$ to $h^*(\SL|_{\SX_{\eta}})$.
A \emph{continuous metric} $\| \ \|$ of $L$ associates with each section $s\in \Gamma(U,L)$ on some Zariski
open subset $U$ of the variety $X$ a continuous function $\|s\|\colon  U^{\mathrm{an}}\to [0,\infty)$ such that one has $\|f\cdot s\|=|f|\cdot \|s\|$ for each regular function $f$ in $\KO_X(U)$. It is furthermore required that $\|s\|>0$, if $s$ is a nowhere vanishing section of $L$.

Let $(\SX,\SL)$ be a model of $(X,L^{\otimes m})$ for some $m\in\N_{>0}$.
The \emph{model metric} $\| \ \|_{\SL}$ of $L$ over $X^{\an}$ is determined by $\|s\|_{\SL}:= \sqrt[m]{|g|}$ on $U^{\an}\cap\mathrm{red}^{-1}(\SU_s)$, where $\SU$ is open in $\SX$, $s$ is a section of $L$ over $U\coloneqq X\cap\SU$, $b$ is a nowhere vanishing section of $\SL$ over $\SU$, $g\in \KO_X(U)$ is a regular function such that $s^{\otimes m}=gb$ over $U$, and $\mathrm{red}_\SX\colon X^\an\to \SX_s$ is the reduction map. 
A model metric $\| \ \|$ on $\KO_X^{\an}$ induces a so-called \emph{model function} $-\log\|1\|\colon X^{\an}\to \R$. 
The $\Q$-vector space of all model functions on $X$ is denoted by $\SD(X)$. 

Let $\mathfrak a$ be an ideal sheaf on a model $\SX$ of $X$ which is supported in the special fibre.
The exceptional divisor $E$ of the blowup $\SY$ of $\SX$ in $\mathfrak a$ defines a model $(\SY,\KO_{\SY}(E))$ of $(X,\KO_X)$.
The associated model function in $\SD(X)$ is denoted by $\log |\mathfrak a|$.

Given a model $\SX$ of $X$ one defines $N^1(\SX/S)_{\Q}$ as the quotient of the $\Q$-vector space $\mathrm{Pic}(\SX)_{\Q}:=\mathrm{Pic}(\SX)\otimes_\Z\Q$ by the subspace generated by line bundles whose restriction to every closed curve $C$ in the special fiber $\SX_s$ has degree zero.
Call $\alpha\in N^1(\SX/S)\coloneqq N^1(\SX/S)_\Q\otimes_{\Q}\R$ 
\emph{nef} if $\alpha\cdot C\geq 0$ for all such curves.

The \emph{space of closed (1,1)-forms on $X$} is defined as
\[
\KZ^{1,1}(X)\coloneqq \R\otimes_\Q\varinjlim N^1(\SX/S)_{\Q}
\]
where the direct limit is taken over all isomorphism classes of models of $X$ and the transition maps are induced by pullback between dominating models.
For the model metric induced by a model $\SL$ of $L^{\otimes m}$, its \emph{curvature form} $c_1(L,\| \ \|_{\SL})$ is the image of $\SL^{\otimes \frac{1}{m}}\in N^1(\SX/S)_{\Q}$ in $\KZ^{1,1}(X)$. 
By construction we have a map $dd^c\colon \SD(X)\to \KZ^{1,1}(X)$ given by $g\mapsto c_1(\KO_X,\| \ \|\cdot \mathrm{e}^{-g})$. 
Call a closed $(1,1)$-form $\theta\in \KZ^{1,1}(X)$ \emph{semipositive} if it can be represented by a nef element in $N^1(\SX/S)$ for some model $\SX$. 
Denote by $N^1(X)$ the quotient of $\mathrm{Pic}(X)_\R:=\mathrm{Pic}(X)\otimes_\Z\R$ by numerical equivalence.
The  map $\{\phantom{a}\}\colon \KZ^{1,1}(X)\to N^1(X)$ induced by the restriction maps $N^1(\SX/S)\to N^1(X)$ sends a closed $(1,1)$-form $\theta$ to its  \emph{de Rham class} $\{\theta\}$.  
A class in $N^1(X)$ is called \emph{ample} if it is an $\R_{>0}$-linear combination of classes induced by ample line bundles on $X$.

We fix $\theta\in \KZ^{1,1}(X)$. 
A model function $\varphi\in \SD(X)$ is called \emph{$\theta$-plurisubharmonic} (\emph{$\theta$-psh} for short) if the class $\theta+dd^c\varphi\in \KZ^{1,1}(X)$ is semipositive.
The space of all psh model functions is denoted by $\mathrm{PSH}_{\SD}(X,\theta)$. For a continuous function $u\in \SC^0(X^{\an})$, its \emph{$\theta$-psh envelope} $P_{\theta}(u)\colon X^{\an}\to\R\cup\{-\infty\}$ is the function defined 
by
\[
P_{\theta}(u)(x)\coloneqq\sup\bigl\{\varphi(x)\ \big|\  \varphi\in \mathrm{PSH}_{\SD}(X,\theta),\ \varphi\leq u\bigr\},~(x\in X^\an) .
\]

For $u\in C^0(X^\an)$, $v\in \SD(X)$ and $t\in \R_{>0}$ we have
\begin{eqnarray}
\label{proposition-2.9-4}
P_\theta(u)-v&=&P_{\theta+dd^cv}(u-v),\\
\label{proposition-2.9-7}
P_{t\theta}(t\theta)&=&t P_\theta(u).
\end{eqnarray}
If the de Rham class $\{\theta\}\in N^1(X)$ is \emph{ample}, 
then $\mathrm{PSH}_\SD(X,\theta)$ is non-empty, $P_{\theta}(u)$ takes value in $\R$, and we have
\begin{equation}
\label{proposition-2.9-5}
\sup_{X^\an}|P_\theta(u)-P_\theta(u')|\leq \sup_{X^\an}|u-u'|.
\end{equation}
for all $u,u'\in C^0(X^\an)$.
For further properties of $\theta$-psh model functions and the $\theta$-psh envelope we refer to \cite{bfj-singular} and \cite[Section 2]{gubler-jell-kuennemann-martin}

For $\theta\in \KZ^{1,1}(X)$ that has an ample de Rham class $\{\theta\}$,
we consider $\varphi\in \mathrm{PSH}_\SD(X,\theta)$ and we assume that there exists a normal model $\SX$ of $X$ such that $\theta$ and $\varphi$ are induced by line bundles $\SM$ and $\SL$ on $\SX$.
The \emph{Monge--Ampère measure} $\mathrm{MA}_\theta(\varphi)$ is the discrete measure 
\[
\mathrm{MA}_\theta(\varphi)
\coloneqq 
\sum_{V}\mathrm{length}_{\KO_{\SX_s,\xi_V}}(\KO_{\SX_s,\xi_V})\,
\mathrm{deg}_{\SM\otimes\SL}(V)\,\delta_{x_V}
\]
on $X^{\rm an}$ where $V$ runs through the irreducible components of $\SX_s$, $x_V\in X^{\an}$ is the unique preimage of the generic point $\xi_V$ of $V$ under the reduction map $\mathrm{red}_\SX\colon X^\an\to \SX_s$, and $\delta_{x_V}$ denotes the Dirac probability measure supported in $x_V$.
For generalization of the Monge--Ampère measure to all $\theta\in \KZ^{1,1}(X)$ with ample de Rham class $\{\theta\}$ and to more general classes of $\theta$-psh functions, we refer to \cite{bfj-solution}, \cite{gubler-jell-kuennemann-martin} and Section \ref{section:MA-equation}.

\section{Global test ideals in mixed characteristic}\label{plus-test-ideals}
In this section, we introduce the global $+$-test ideals defined and studied by Hacon, Lamarche and Schwede in \cite{hls2021}.
The $+$-test ideals form a mixed characteristic analogue of the theories of multiplier ideals in characteristic zero and test ideals in positive characteristic. 
The theory is based on the notion of $+$-stable sections introduced in \cite{bmpstww2021,ty2021}. 

Let $(R,\Fm,k)$ be a complete noetherian local ring of mixed characteristic.
Let $p>0$ denote the characteristic of the residue field $k$.
Let $\SX$ be a normal integral scheme  which is proper over $S\coloneqq \mathrm{Spec}\, R$. 
The canonical sheaf $\omega_\SX$ is reflexive (\cite[\href{https://stacks.math.columbia.edu/tag/0AWK}{Tag 0AWK}]{stacks-project} and \cite[Theorem 1.9]{hartshorne1994}) and we fix a canonical divisor $K_\SX$ with $\omega_\SX=\KO_\SX(K_\SX)$ \cite[\href{https://stacks.math.columbia.edu/tag/0EBM}{Tag 0EBM}]{stacks-project}.

\begin{defi}\label{definition-of-B}
For a reflexive sheaf $\SM=\KO_\SX(M)$ associated with a divisor $M$ and an effective $\Q$-divisor $B$ on $\SX$,
the \emph{subspace of $+$-stable sections} of the adjoint line bundle $\omega_\SX\otimes \SM$ relative to $B$
\[
\mathbf{B}^0\bigl(\SX,B,\KO_\SX(K_\SX+M)\bigr)
\subset H^0\bigl(\SX,\KO_\SX(K_\SX+M)\bigr)
\]
is defined by
\[
\bigcap_{f\colon \SY\to \SX}\mathrm{Im}
\Bigl(H^0\bigl(\SY,\KO_\SY(K_\SY+\lceil f^*(M-B)\rceil\bigr)
\longrightarrow
H^0\bigl(\SX,\KO_\SX(K_\SX+M)\bigr)\Bigr),
\]
where an algebraic closure $\overline{\kappa(\SX)}$ of the 
function field $\kappa(\SX)$ is fixed, and $f$ runs through the finite surjective 
morphisms $f\colon \SY\to \SX$ from a normal integral scheme
$\SY$ together with an embedding 
$\kappa(\SY)\hookrightarrow \overline{\kappa(\SX)}$ \cite[Definition 3.2 and Lemma 3.8]{hls2021}.
\end{defi}

\begin{rem} \label{remark in Cartier case}
If $M-B$ is $\Q$-Cartier, then Definition \ref{definition-of-B}
holds as well if $f$ runs through the alterations $f\colon \SY\to \SX$ from a normal integral scheme $\SY$ together with an embedding $\kappa(\SY)\hookrightarrow \overline{\kappa(\SX)}$ \cite[Lemma 3.8]{hls2021}.
\end{rem}

For the rest of this section, we assume that the scheme $\SX$ is regular and projective over $S$.

\begin{defi}\label{defi-3.3}
Take a very ample line bundle $\SH$ on $\SX$. 
For an effective $\Q$-divisor $B$ on $\SX$ and for each $i\in \N$ the subspace
\begin{equation}\label{defi-plus-stable-sections}
\BFB^0\bigl(\SX,B,\omega_\SX\otimes \SH^i\bigr)
\subset H^0\bigl(\SX,\omega_\SX\otimes \SH^i\bigr)
\end{equation}
generates the subsheaf $\SN_i\subset \omega_\SX\otimes \SH^i$ which defines
\begin{equation}\label{def-test-ideal}
\SJ_i\coloneqq \SN_i\otimes \SH^{-i}\subset \omega_\SX.
\end{equation}
The sequence $(\SJ_i)_{i\in \N}$ is increasing and
becomes stationary.
Define the \emph{$+$-test submodule} $\tau_+(\omega_\SX,B)$ to be $\SJ_i$ for $i\gg 0$ \cite[Definition 4.3]{hls2021}.
\end{defi}

\begin{rem} \label{remarks on test ideals}
(i) Definition \ref{defi-3.3} does not depend on the choice of $\SH$ \cite[Proposition 4.5]{hls2021}.

(ii) For $i\gg 0$ we have
$H^0(\SX,\tau_+(\omega_\SX,B)\otimes \SH^i)=
\BFB^0(\SX,B,\omega_\SX\otimes \SH^i))$
\cite[Proposition 4.7]{hls2021}.
If $B'\geq B$,
then $\tau_+(\omega_\SX,B')\subset \tau_+(\omega_\SX,B)$
and if $F$ is an effective Cartier divisor, then \cite[Lemma 4.8]{hls2021}
\begin{equation}\label{plus-formula}
\tau_+(\omega_\SX,B+F)=\tau_+(\omega_\SX,B)\otimes \KO_\SX(-F).
\end{equation}

(iii) Equality \eqref{plus-formula} allows one to define $\tau_+(\omega_\SX,B)$ for a not necessarily effective $\Q$-divisor $B$ as 
$\tau_+(\omega_\SX,B+F)\otimes \KO_\SX(F)$ where $F$ is a 
Cartier divisor such that $B+F$ is effective \cite[Definition 4.14]{hls2021}.
\end{rem}

\begin{defi}
For an effective $\Q$-divisor $B$ on $\SX$, the ideal sheaf 
\begin{equation}\label{def-test-ideal2}
\tau_+(\KO_\SX,B)\coloneqq
\tau_+(\omega_\SX,K_\SX+B)
\end{equation}
is the called the {+-}\emph{test ideal associated with $B$} \cite[Definition 4.15, Lemma 4.18]{hls2021}. 
Using $F=-K_\SX$ in Remark \ref{remarks on test ideals}(iii), this is indeed a coherent ideal sheaf.
\end{defi}

We  recall the subadditivity conjecture of Hacon, Lamarche and Schwede \cite[Conjecture 8.3]{hls2021}.

\begin{conjecture}\label{hacon-et-al-8-3}
Given effective $\Q$-divisors $D$ and $E$ on $\SX$, we have
\[
\tau_+(\KO_\SX,D+E)\subset \tau_+(\KO_\SX,D)\cdot \tau_+(\KO_\SX,E).
\]
\end{conjecture}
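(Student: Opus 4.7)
The statement is an open conjecture; the authors do not resolve it and in fact bypass it by switching to the perturbation friendly global test ideals of \S\ref{perturbation-friendly-global-test-ideals}. I will therefore describe the natural attack, modelled on the Demailly--Ein--Lazarsfeld proof of subadditivity for multiplier ideals, and pinpoint where it breaks down in the mixed characteristic setting.

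The plan is to work on the product $\SX\times_S\SX$ with the $\Q$-divisor $\tilde B\coloneqq p_1^*D+p_2^*E$, where $p_1,p_2$ are the projections, and to restrict along the diagonal $\Delta\colon\SX\hookrightarrow\SX\times_S\SX$, which satisfies $\Delta^*\tilde B=D+E$. The argument decomposes into two compatibilities. The first is a \emph{K\"unneth-type} containment, roughly of the form
\[
\tau_+\bigl(\omega_{\SX\times_S\SX},K_{\SX\times_S\SX}+\tilde B\bigr)
\supseteq\tau_+(\KO_\SX,D)\boxtimes\tau_+(\KO_\SX,E),
\]
to be established at the level of $+$-stable sections via the identification in Remark \ref{remarks on test ideals}(ii), by showing that external products of sections lying in $\BFB^0$ on each factor land in $\BFB^0$ of the product relative to $\tilde B$. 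The second is a \emph{restriction inequality} $\tau_+(\KO_\SX,\Delta^*\tilde B)\subseteq\Delta^*\tau_+(\KO_{\SX\times_S\SX},\tilde B)$ along the diagonal; I would establish this by choosing a common alteration dominating $\Delta$, using the alteration variant of Remark \ref{remark in Cartier case} (since $\tilde B$ and its pullback are $\Q$-Cartier on the regular schemes in play), and invoking a Grothendieck trace computation relating $+$-stable sections of $\omega$ under restriction to a regular complete intersection subscheme. Combining the two containments via \eqref{def-test-ideal2} and the identification $\tau_+(\KO_{\SX\times_S\SX},\tilde B)\cdot\KO_\Delta=\tau_+(\KO_\SX,D)\cdot\tau_+(\KO_\SX,E)$ then yields the conjecture.

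The hard part, and the reason the statement remains a conjecture, is the K\"unneth-type containment. The space $\BFB^0$ is defined as an intersection of images of $H^0$ over \emph{all} finite surjective covers (or alterations) of the ambient scheme, and covers of $\SX\times_S\SX$ are not in general dominated by products $\SY\times_S\SZ$ of covers of the factors, so there is no formal reason why an external product of $+$-stable sections on each factor should be $+$-stable on the product, nor why sections witnessing membership on the product should split on the factors. In equicharacteristic zero this step is carried out via a log resolution of $\tilde B$ together with Kawamata--Viehweg vanishing, and in characteristic $p$ via the compatibility of Frobenius with exterior products and the Cartier operator; neither tool is available in mixed characteristic, and Bhatt's $p$-adic vanishing results hold only up to a further finite cover, which is the very operation being intersected over in the definition of $\BFB^0$. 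It is precisely this failure of a product principle for $+$-stable sections that motivated the perturbation friendly test ideals of \cite{schwede-etal-2024}, where a $p$-adic Riemann--Hilbert input replaces the missing vanishing and forces subadditivity directly; this is why the present paper's applications to Monge--Amp\`ere proceed through the perturbation friendly variant rather than through Conjecture \ref{hacon-et-al-8-3}.
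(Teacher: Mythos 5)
You correctly recognize that this statement is left open in the paper: it is Conjecture~8.3 of Hacon--Lamarche--Schwede, the authors explicitly record it as unresolved, and they sidestep it by passing to the perturbation friendly test ideals of \cite{schwede-etal-2024}, for which subadditivity is a theorem. Your diagnosis of where the Demailly--Ein--Lazarsfeld strategy breaks down --- the lack of a product/restriction principle for $\BFB^0$ over the unbounded family of alterations, and the absence of a Kawamata--Viehweg or Frobenius/Cartier replacement in mixed characteristic --- is a sound account of the obstruction and is consistent with the paper's motivation for switching to the perturbation friendly variant. Nothing more is required here since there is no proof in the paper to compare against.
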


The above conjecture is still open, but Bhatt, Ma, Patakfalvi, Schwede, Tucker, Waldron and Witaszek prove subadditivity for their new perturbation-friendly test ideals which we will introduce in the next section.

\section{Perturbation friendly global test ideals}\label{perturbation-friendly-global-test-ideals}

In this section, we present the perturbation friendly global test ideals after Bhatt, Ma, Patakfalvi, Schwede, Tucker, Waldron, and Witaszek \cite{schwede-etal-2024}. They are suitable for our purpose as subadditivity is known for them in contrast to the test ideals considered in the previous section.

Let $(R,\Fm,k)$ be a complete discrete valuation ring of mixed characteristic with field of fractions $K$.
We fix a flat projective integral regular scheme $\SX$ over $S=\Spec(R)$.
We fix a canonical divisor $K_\SX$ with $\omega_\SX=\KO_\SX(K_\SX)$.

\begin{defi} \label{def properties new test modules}
Let $B$ be a $\Q$-divisor on $\SX$. 
By \cite[Proposition 7.14(f)]{schwede-etal-2024}, there is a unique \emph{perturbation friendly test module} $\tau(\omega_\SX,B)$ such that there exists an effective Cartier divisor $G^0$ with the property that 
\begin{equation}\label{define-pert-test-module}
\tau(\omega_\SX,B)= \tau_+(\omega_\SX,B+ \varepsilon G)
\end{equation}
for all divisors $G \geq G^0$ on $\SX$ and all $0<\varepsilon \ll 1$ (depending on $G$).

We define the \emph{perturbation friendly test ideal}
\begin{equation}\label{define-pert-test-ideal}
\tau(\KO_\SX,B)\coloneqq \tau(\omega_\SX,K_\mathscr{X}+B).
\end{equation}
Both the test module $\tau(\omega_\SX,B)$ and the test ideal $\tau(\KO_\SX,B)$ are coherent \cite[Definition 7.12, Theorem 7.13]{schwede-etal-2024}. If $B$ is effective, then the fractional ideal sheaf $\tau(\KO_\SX,B)$ is indeed an ideal sheaf in $\KO_{\SX}$ \cite[Definition 7.18]{schwede-etal-2024}.
\end{defi}

Here are some properties of perturbation friendly test ideals $\tau(\KO_\SX,B)$ including subadditivity which holds only conjecturally \cite[Conjecture 8.3]{hls2021} for the test ideals $\tau_+(\KO_\SX,B)$.

\begin{theo}[Bhatt, Ma, Patakfalvi, Schwede, Tucker, Waldron, Witaszek]
\label{more properties new test modules}
Let $B, B_1,B_2$ be $\Q$-divisors on $\SX$.

(i) There exists a Cartier divisor $G^0$ on $\SX$ with the property that 
\[
\tau(\KO_\SX,B)= \tau_+(\KO_\SX,B+ \varepsilon G)\]
for all divisors $G \geq G^0$ on $\SX$ and all $0<\varepsilon \ll 1$ (depending on $G$).

(ii) If $B$ is a $\Q$-divisor and $F$ is a divisor on $\SX$, then 
\begin{align}
\label{noplus-formula}
\tau(\omega_\SX,B+F)=\tau(\omega_\SX,B)\otimes \KO_\SX(-F),\\
\label{noplus-formula2}
\tau(\KO_\SX,B+F)=\tau(\KO_\SX,B)\otimes \KO_\SX(-F).
\end{align}

(iii) \emph{(Subadditivity)} The perturbation friendly test ideals satisfy the \emph{subadditivity property}, i.e. we have
\[
\tau(\KO_\SX,B_1+B_2)\subset \tau(\KO_\SX,B_1)\cdot \tau(\KO_\SX,B_2)
\]
for effective $\Q$-divisors $B_1,B_2$ on $\SX$.

(iv) \emph{(Effective global generation)}
Let $B$ be effective and let $D$ be a divisor such that the $\Q$-divisor $D-K_\SX-B$ is big and nef, and let $H$ be a  globally generated ample divisor. 
Then $\tau(\KO_\SX,B)\otimes \KO_\SX(nH+D)$ is globally generated by $\mathbf B^0(\SX,B, \KO_\SX(nH+D))$ for all $n\geq \mathrm{dim}(\SX\otimes_Rk)$.

(v) \emph{($\tau(\KO_\SX,0)$ is vertical)}
The ideal sheaf $\tau(\KO_\SX,0)$ is vertical, i.e. its support is contained in the special fiber. 
\end{theo}

\begin{proof}
Everything is shown in \cite{schwede-etal-2024}.
We give precise references.
Property (i) holds by \cite[Corollary 7.19(b)]{schwede-etal-2024} and 
(ii) follows from \eqref{plus-formula}, \eqref{define-pert-test-module}, and \eqref{define-pert-test-ideal}.
Statement (iii) is the subadditivity property \cite[Theorem 7.20(e)]{schwede-etal-2024} and (iv) is the effective global generation property \cite[Corollary 7.22]{schwede-etal-2024}.
Recall for (v) that a coherent ideal of $\SX$ is called \emph{vertical} if its support is contained in the special fiber. 
Let $X\coloneqq \SX\otimes_RK$ denote the generic fibre of $\SX$, put $K_X\coloneqq K_\SX|_X$, and recall that the Grauert--Riemenschneider sheaf $\mathscr{J}(X,\omega_X)$ on the regular scheme $X$ equals $\omega_X$ \cite[Definition A.1]{schwede-etal-2024}.
We conclude from \cite[Proposition 7.14(c)]{schwede-etal-2024}, and formula \eqref{noplus-formula} above that
\begin{align*}
\tau(\KO_\SX,0)|_X&=\tau(\omega_\SX,0)\otimes_{\KO_\SX}\KO_\SX(-K_\SX)|_X
=\mathscr{J}(X,\omega_X)\otimes_{\KO_X}\KO_X(-K_X)=\KO_X.
\end{align*}
It follows that the ideal sheaf $\tau(\KO_\SX,0)$ is vertical.
\end{proof}

\begin{defi}\label{defi-pert-friendly}
Let $D$ be a divisor on $\SX$ with linear series $|D|\neq 0$ and $\lambda\in\Q_{>0}$. 
Define the 
\emph{perturbation friendly test ideal of the linear series $|D|$} to be 
\begin{equation}\label{def-pf-test-ideal}
\tau(\KO_\SX,\lambda\cdot |D|):=\sum_{E\in |D|}\tau(\KO_\SX,\lambda\cdot E).
\end{equation}
Thanks to the Noetherian assumption, one can pick a finite number of elements $D_1,\dots, D_r$ in $|D|$ such that $\sum_{i=1}^{r}\tau(\KO_\SX,\lambda\cdot D_i)$ agrees with \eqref{def-pf-test-ideal}.
\end{defi}

\begin{lemma}\label{relate test and base}
If $D$  is a divisor on $\SX$ with linear system $|D|\neq \emptyset$, then 
\begin{equation}\label{relate test and base ideal}
\tau(\KO_\SX, |D|) = \mathfrak{b}_{|D|}\cdot\tau(\KO_\SX, 0)
\end{equation}
where $\mathfrak{b}_{|D|}\coloneqq \mathrm{Im}(H^0(\SX,\KO(D))\otimes_R\KO(-D)\to \KO_\SX)$ denotes the base ideal of the linear system $|D|$.
\end{lemma}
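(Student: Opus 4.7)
The plan is to unwind Definition \ref{defi-pert-friendly} for the left-hand side, apply the translation formula \eqref{noplus-formula2} term by term, and then recognise the remaining sum of sub-ideals as the base ideal $\mathfrak{b}_{|D|}$.

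First I would invoke Definition \ref{defi-pert-friendly} with $\lambda=1$ to write
\[
\tau(\KO_\SX,|D|)=\sum_{E\in |D|}\tau(\KO_\SX,E),
\]
where by the Noetherian finiteness statement in that definition this is in fact realised as a finite sum of coherent ideal sheaves. Each $E\in |D|$ is an effective Cartier divisor, so \eqref{noplus-formula2} applied with $B=0$ and $F=E$ gives
\[
\tau(\KO_\SX,E)=\tau(\KO_\SX,0)\otimes_{\KO_\SX}\KO_\SX(-E).
\]
Since $\tau(\KO_\SX,0)\subseteq\KO_\SX$ (cf.\ Definition \ref{def properties new test modules}, or Remark \ref{more properties new test modules}(v)) and $\KO_\SX(-E)\subseteq \KO_\SX$ (because $E$ is effective), the tensor product above identifies canonically with the product ideal $\tau(\KO_\SX,0)\cdot \KO_\SX(-E)$ inside $\KO_\SX$; no torsion issues arise because $\KO_\SX(-E)$ is invertible, so tensoring with it preserves inclusions.

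Next I would pull the factor $\tau(\KO_\SX,0)$ out of the sum:
\[
\tau(\KO_\SX,|D|)=\sum_{E\in|D|}\tau(\KO_\SX,0)\cdot\KO_\SX(-E)=\tau(\KO_\SX,0)\cdot\Bigl(\sum_{E\in|D|}\KO_\SX(-E)\Bigr).
\]
To finish, I would verify $\sum_{E\in|D|}\KO_\SX(-E)=\mathfrak{b}_{|D|}$ via the standard correspondence between nonzero sections and effective divisors: any $s\in H^0(\SX,\KO_\SX(D))\setminus\{0\}$ defines $E=\mathrm{div}(s)+D\in |D|$, and the multiplication-by-$s$ map $\KO_\SX(-D)\to \KO_\SX$ has image exactly $\KO_\SX(-E)\subseteq\KO_\SX$. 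The sum of these images over all such $s$ is by definition the image of $H^0(\SX,\KO(D))\otimes_R\KO(-D)\to\KO_\SX$, i.e.\ $\mathfrak{b}_{|D|}$.

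I do not expect any serious obstacle: the lemma is a direct bookkeeping consequence of the translation formula \eqref{noplus-formula2} combined with the standard description of the base ideal. The only point requiring mild care is the identification of the formal tensor $\tau(\KO_\SX,0)\otimes\KO_\SX(-E)$ with the product ideal $\tau(\KO_\SX,0)\cdot\KO_\SX(-E)$ in $\KO_\SX$, which is automatic because $\KO_\SX(-E)$ is invertible.
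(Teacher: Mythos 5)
Your proposal is correct and follows essentially the same route as the paper's proof: expand $\tau(\KO_\SX,|D|)$ via Definition~\ref{defi-pert-friendly}, apply~\eqref{noplus-formula2} to each summand, and collect the resulting sum of ideal sheaves into $\mathfrak{b}_{|D|}\cdot\tau(\KO_\SX,0)$. The only difference is that you spell out the identification $\sum_{E\in|D|}\KO_\SX(-E)=\mathfrak{b}_{|D|}$ and the passage from $\otimes$ to the product ideal, both of which the paper leaves implicit.
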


\begin{proof}
Our proof follows Hacon, Lamarche and Schwede \cite[Lemma 7.5(b)]{hls2021}.
Using \eqref{noplus-formula2} and \eqref{def-pf-test-ideal}, one gets
\[
\tau(\KO_\SX, |D|)=\sum_{E\in |D|} \tau(\KO_\SX, E)=\sum_{E\in |D|}\tau(\KO_\SX, 0)\otimes \KO_\SX(-E)=\mathfrak{b}_{|D|}\cdot \tau(\KO_\SX,0),
\]
which shows \eqref{relate test and base ideal}. 
\end{proof}

\begin{defi}
Let $D$ be a $\mathbb{Q}$-divisor with Iitaka dimension $\kappa(D)\geq0$, so $mD$ is a divisor such that $|mD|\neq \emptyset$ for all large and sufficiently divisible $m\in\mathbb{N}$, and $\lambda\in\Q_{>0}$. 
Define the \emph{asymptotic perturbation friendly test ideal} of $|D|$ by
\begin{equation}\label{def-pf-test-ideal-asymptotic-divisor}
\tau(\KO_\SX,\lambda\cdot ||D||)\coloneqq \bigcup_{m>0}\tau\Bigl(\KO_\SX,\frac{\lambda}{m}\cdot |mD|\Bigr).
\end{equation}
\end{defi}

\begin{lemma}\label{asymptotic-pf-test-ideal-property}
The asymptotic test ideals satisfies the following properties:
\begin{enumerate}
\item 
For large and sufficiently divisible $m\in\N$, one has
\begin{equation}\label{stabilize-asymptotic-test-module}
\tau(\KO_\SX,\lambda\cdot ||D||)=\tau\Bigl(\KO_\SX,\frac{\lambda}{m}\cdot |mD|\Bigr).
\end{equation}
\item 
If $|D|\neq \emptyset$, then $\tau(\KO_\SX,\lambda\cdot |D|)\subset \tau(\KO_\SX,\lambda\cdot ||D||) $.
\item 
If $\lambda<\mu$, $(\lambda,\mu)\in\Q_{>0}^2$, then $ \tau(\KO_\SX,\mu\cdot ||D||)\subset \tau(\KO_\SX,\lambda\cdot ||D||)$. 
\item 
If $k\in \mathbb{N}_{>0}$, then $ \tau(\KO_\SX,\lambda\cdot ||D||) =\tau(\KO_\SX,\frac{\lambda}{k}\cdot ||kD||)$.
\end{enumerate}
\end{lemma}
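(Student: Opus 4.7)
The plan is to establish (ii) directly from the definition, then (i) via a directedness argument together with the Noetherian property of $\SX$, then to deduce (iii) from monotonicity of $\tau(\KO_\SX,-)$ in the boundary divisor, and finally to derive (iv) as a formal consequence of the stabilization proved in (i).

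For (ii) there is nothing to show beyond observing that $\tau(\KO_\SX,\lambda\cdot|D|)$ is the $m=1$ term of the union \eqref{def-pf-test-ideal-asymptotic-divisor}. For (i), the key point I would establish is that the family of coherent ideal sheaves $\{\tau(\KO_\SX,\tfrac{\lambda}{m}\cdot|mD|)\}$, indexed by those $m\in\N_{>0}$ for which $mD$ is an honest divisor with $|mD|\neq\emptyset$, is ascending along the divisibility partial order. Indeed, if $m\mid m'$ and $m'=km$, then each $E\in|mD|$ satisfies $kE\in|m'D|$ (by linear equivalence $kE\sim kmD=m'D$) and $\tfrac{\lambda}{m}E=\tfrac{\lambda}{m'}\cdot kE$ as $\Q$-divisors, so by Definition \ref{defi-pert-friendly}
$$\tau\bigl(\KO_\SX,\tfrac{\lambda}{m}\cdot E\bigr)=\tau\bigl(\KO_\SX,\tfrac{\lambda}{m'}\cdot kE\bigr)\subset\tau\bigl(\KO_\SX,\tfrac{\lambda}{m'}\cdot|m'D|\bigr).$$
Summing over $E\in|mD|$ gives the inclusion $\tau(\KO_\SX,\tfrac{\lambda}{m}\cdot|mD|)\subset\tau(\KO_\SX,\tfrac{\lambda}{m'}\cdot|m'D|)$. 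Noetherianity of $\SX$ then forces the ascending chain to stabilize, which proves (i).

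For (iii) I would first deduce monotonicity of the perturbation friendly test ideal in the boundary: given $\Q$-divisors $B'\geq B$, choose by Remark \ref{more properties new test modules}(i) a common $G\geq G^0$ and $0<\varepsilon\ll1$ such that both $\tau(\KO_\SX,B)=\tau_+(\KO_\SX,B+\varepsilon G)$ and $\tau(\KO_\SX,B')=\tau_+(\KO_\SX,B'+\varepsilon G)$; since $B'+\varepsilon G\geq B+\varepsilon G$, the monotonicity of $\tau_+$ recorded in Remark \ref{remarks on test ideals}(ii) yields $\tau(\KO_\SX,B')\subset\tau(\KO_\SX,B)$. Applying this to $B=\tfrac{\lambda}{m}E\leq\tfrac{\mu}{m}E=B'$ for each $E\in|mD|$, summing over $E$ and taking the union over $m$ gives (iii).

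For (iv) I would apply (i) twice. Choose $n$ large and sufficiently divisible so that $m\coloneqq nk$ computes both asymptotic test ideals via (i); comparing
$$\tau\bigl(\KO_\SX,\tfrac{\lambda}{k}\cdot\|kD\|\bigr)=\tau\bigl(\KO_\SX,\tfrac{\lambda/k}{n}\cdot|nkD|\bigr)=\tau\bigl(\KO_\SX,\tfrac{\lambda}{nk}\cdot|nkD|\bigr)=\tau(\KO_\SX,\lambda\cdot\|D\|)$$
finishes the proof. The only delicate point I anticipate is the monotonicity of $\tau$ used in (iii), as this property is not recorded explicitly in the excerpt for $\tau$ (only for $\tau_+$); but it follows cleanly from the perturbation-friendly definition \eqref{define-pert-test-module} once one selects a single pair $(G,\varepsilon)$ that works simultaneously for $B$ and $B'$, which is possible because the condition in Remark \ref{more properties new test modules}(i) is monotone in $G$ and open in $\varepsilon$.
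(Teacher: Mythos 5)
Your proposal is correct and follows essentially the same route as the paper: (ii) by inspection of the definition, (i) via the ascending chain along divisibility plus Noetherianity, (iii) by transferring monotonicity from $\tau_+$ to $\tau$ through the perturbation-friendly defining formula, and (iv) by applying the stabilization of (i) to both sides. The extra care you take in (iii) — picking a single pair $(G,\varepsilon)$ working for both boundary divisors — is a useful detail the paper leaves implicit.
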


\begin{proof}
The stabilization property (i) follows from the Noetherian assumption if one observes that $\tau(\KO_\SX,\frac{\lambda}{m}|mD|)\subset \tau(\KO_\SX,\frac{\lambda}{m'}|m'D|)$ for all $m,m'\in \N$ where $m$ divides $m'$.
Now Properties (ii)--(iv) follow as in \cite[Lemma 7.5(a),(c),(d),(f)]{hls2021}.
In fact (ii) holds by definition, (iii) is a consequence of Remark \ref{remarks on test ideals}(ii) together with \eqref{define-pert-test-module}, \eqref{def-pf-test-ideal} and \eqref{def-pf-test-ideal-asymptotic-divisor}, and (iv) follows immediately from (i).
\end{proof}

One can show effective global generation and subadditivity also for the asymptotic perturbation friendly global test ideals.

\begin{theo}\label{test-ideal-global-generation-and}
Let $D$ be  a $\mathbb{Q}$-divisor on $\SX$ with Iitaka dimension $\kappa(D)\geq0$.
	
(i) Let $\SH=\KO_\SX(H)$ be a globally generated ample line bundle, 
$E$ be a divisor, and $\lambda\in\Q_{>0}$. 
Let $n=\dim \SX\otimes_Rk$.
If $E-K_\SX-\lambda D$ is big and nef, then
\begin{equation}\label{global-section-with-asymptotic-test-module}
\tau(\KO_\SX,\lambda\cdot ||D||)\otimes\KO_\SX(nH+E)
\end{equation}
is globally generated by a sub linear series of $H^0(\SX, \KO_\SX(nH+E))$ for all $n\geq {\mathrm{dim}(\SX\otimes_Rk)}$.

(ii) For $q,r\in\N_{>0}$, we have
\[
\tau\bigl(\KO_\SX,qr\cdot \|D\|\bigr)\subset \tau\bigl(\KO_\SX,r\cdot \|D\|\bigr)^q.
\]
\end{theo}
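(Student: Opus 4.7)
The strategy is to reduce the asymptotic statement to the non-asymptotic effective global generation recalled in Remark \ref{more properties new test modules}(iv). First, using Lemma \ref{asymptotic-pf-test-ideal-property}(i), choose $m\in\N$ large and sufficiently divisible so that $\tau(\KO_\SX,\lambda\cdot\|D\|)=\tau(\KO_\SX,\tfrac{\lambda}{m}\cdot|mD|)$. By Definition \ref{defi-pert-friendly}, pick finitely many $E_1,\ldots,E_r\in|mD|$ whose contributions account for the sum, i.e.\ $\tau(\KO_\SX,\tfrac{\lambda}{m}\cdot|mD|)=\sum_{i=1}^{r}\tau(\KO_\SX,\tfrac{\lambda}{m}E_i)$. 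For each $i$, the effective $\Q$-divisor $\tfrac{\lambda}{m}E_i$ satisfies $\tfrac{\lambda}{m}E_i\sim_\Q \lambda D$ because $E_i\sim mD$; hence $E-K_\SX-\tfrac{\lambda}{m}E_i$ is numerically equivalent to $E-K_\SX-\lambda D$ and therefore big and nef. Applying Remark \ref{more properties new test modules}(iv) with $B=\tfrac{\lambda}{m}E_i$ yields that $\tau(\KO_\SX,\tfrac{\lambda}{m}E_i)\otimes\KO_\SX(nH+E)$ is generated by $\mathbf B^0(\SX,\tfrac{\lambda}{m}E_i,\KO_\SX(nH+E))\subseteq H^0(\SX,\KO_\SX(nH+E))$ for all $n\geq\dim(\SX\otimes_R k)$. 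Summing the globally generated subsheaves over $i$ shows that $\tau(\KO_\SX,\lambda\cdot\|D\|)\otimes\KO_\SX(nH+E)$ is globally generated by the sub linear series $\sum_{i=1}^{r}\mathbf B^0(\SX,\tfrac{\lambda}{m}E_i,\KO_\SX(nH+E))\subseteq H^0(\SX,\KO_\SX(nH+E))$.

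\textbf{Plan for part (ii).} The idea is to iterate subadditivity for the perturbation friendly test ideals from Remark \ref{more properties new test modules}(iii) and then absorb the sum over $|mD|$ into the asymptotic union. Using Lemma \ref{asymptotic-pf-test-ideal-property}(i) simultaneously for $qr$ and $r$ (take a common multiple), pick $m\in\N$ large and sufficiently divisible so that both $\tau(\KO_\SX,qr\cdot\|D\|)=\tau(\KO_\SX,\tfrac{qr}{m}\cdot|mD|)$ and $\tau(\KO_\SX,r\cdot\|D\|)=\tau(\KO_\SX,\tfrac{r}{m}\cdot|mD|)$. For each $E\in|mD|$, write $\tfrac{qr}{m}E=\underbrace{\tfrac{r}{m}E+\cdots+\tfrac{r}{m}E}_{q}$ and apply subadditivity $q-1$ times to obtain $\tau(\KO_\SX,\tfrac{qr}{m}E)\subseteq\tau(\KO_\SX,\tfrac{r}{m}E)^{q}$. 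Since $\tau(\KO_\SX,\tfrac{r}{m}E)\subseteq\tau(\KO_\SX,\tfrac{r}{m}\cdot|mD|)\subseteq\tau(\KO_\SX,r\cdot\|D\|)$, we get $\tau(\KO_\SX,\tfrac{qr}{m}E)\subseteq\tau(\KO_\SX,r\cdot\|D\|)^{q}$ for every $E\in|mD|$. Taking the sum over $E\in|mD|$ on the left (the right-hand side is independent of $E$) yields $\tau(\KO_\SX,\tfrac{qr}{m}\cdot|mD|)\subseteq\tau(\KO_\SX,r\cdot\|D\|)^{q}$, which by our choice of $m$ is exactly the claimed inclusion.

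\textbf{Anticipated difficulties.} Both proofs are largely formal once the inputs (stabilization, effective global generation, and subadditivity for the ordinary perturbation friendly test ideals) are in place. The only care needed is bookkeeping with $\Q$-divisors: verifying that the $\Q$-linear equivalence $\tfrac{\lambda}{m}E_i\sim_\Q\lambda D$ propagates through the big-and-nef hypothesis in (i), and that the stabilization index $m$ can be chosen to serve both asymptotic ideals appearing in (ii). Both are routine, so no serious obstacle arises beyond assembling the pieces correctly.
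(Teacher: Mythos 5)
Your proof is correct and follows essentially the same route as the paper: both parts reduce via the stabilization property of Lemma \ref{asymptotic-pf-test-ideal-property}(i) to the non-asymptotic test ideal of finitely many representatives of $|mD|$, then invoke the effective global generation of Remark \ref{more properties new test modules}(iv) for part (i) and iterate the subadditivity of Remark \ref{more properties new test modules}(iii) for part (ii). The only cosmetic difference is in (ii), where the paper first uses the homogeneity of Lemma \ref{asymptotic-pf-test-ideal-property}(iv) to reduce to $r=1$, while you carry the general $r$ through directly; both are equally valid and of comparable length.
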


\begin{proof}
(i) follows from \cite[Remark 8.36]{schwede-etal-2024}.
For the convenience of the reader we give a proof following \cite[Lemma 7.5 (e)]{hls2021}.
For sufficiently divisible $m\in \N$ we conclude from
\eqref{stabilize-asymptotic-test-module} that
\begin{align*}
\tau(\KO_\SX,\lambda\cdot ||D||)\otimes\KO_\SX(nH+E)
&=\tau\Bigl(\SO_\SX,\frac{\lambda}{m}\cdot |mD|\Bigr)\otimes\KO_\SX(nH+E)\\
&=\sum_{i=1}^{r}\tau\Bigl(\KO_\SX, \frac{\lambda}{m}\cdot D_i\Bigr)\otimes\KO_\SX(nH+E)
\end{align*}
where we pick a finite number of elements $D_1,\dots, D_r$ in $|mD|$ as in Definition \ref{defi-pert-friendly}. 
From Remark \ref{more properties new test modules}(iv) we conclude that
\[
\tau\Bigl(\KO_\SX, \frac{\lambda}{m}\cdot D_i\Bigr)\otimes\KO_\SX(nH+E)
\]
is globally generated by 
$\mathbf B^0(\SX,\frac{\lambda}{m}\cdot D_i, \KO_\SX(nH+E))\subset H^0(\SX,\KO_\SX(nH+E))$ 
for all $n\geq \mathrm{dim}(\SX\otimes_Rk)$.
This finishes the proof of (i).

(ii)
By homogeneity of asymptotic test ideals seen in Lemma \ref{asymptotic-pf-test-ideal-property}(iv), we may assume $r=1$. 
For sufficiently divisible $m$, we deduce from \eqref{stabilize-asymptotic-test-module} that  
\[
\tau\bigl(\SO_\SX,q\cdot ||D||\bigr)=\tau\Bigl(\SO_\SX,\frac{q}{m}\cdot |mD|\Bigr).
\]
For any effective divisor $D_m \sim mD$, using subadditivity in Remark \eqref{more properties new test modules}(iii) for the $\Q$-divisor $\frac{1}{m}D_m$, we get
\[
\tau\Bigl(\SO_\SX,\frac{q}{m}\cdot D_m\Bigr) 
= \tau\Bigl(\SO_\SX,q \cdot \frac{1}{m} D_m\Bigr)
\subset \tau\Bigl(\SO_\SX, \frac{1}{m} D_m\Bigr)^q
\]
and hence
\[
\tau\bigl(\SO_\SX,q\cdot ||D||\bigr)
\subset\tau\Bigl(\SO_\SX,\frac{1}{m}\cdot |mD|\Bigr)^q
\subset\tau\bigl(\KO_\SX,\|D\|\bigr)^q
\]
proving the claim. 
\end{proof}

\section{Continuity of the envelope of the zero function}

Let $K$ be a complete discretely valued field of mixed characteristic $(0,p)$.
Let $X$ be a $n$-dimensional smooth projective variety over $K$ and $L$ an ample line bundle on $X$. 
In the approach of Boucksom--Favre--Jonsson \cite{bfj-singular,bfj-solution} to pluripotential theory on $X^\an$, 
a model $\SL$ of $L$ on the model $\SX$ induces a curvature form $\theta$ on $X^\an$ and we denote by $P_\theta(u)$ the $\theta$-psh envelope of a continuous real function $u$ on $X^\an$, see \cite[2.4--2.6]{gubler-jell-kuennemann-martin} for details.
We follow the strategy from \cite{bfj-singular} and \cite{gubler-jell-kuennemann-martin} to show continuity of the envelope of the zero function $P_\theta(0)$.

\begin{theo} \label{relative Theorem 8.5}
We assume that $(X,L)$ has a model $(\SX, \mathscr A)$ with $\SX$ regular and with $\mathscr A$ an ample line bundle on $\SX$. 
Let $\theta$ be the curvature form on $X^\an$ induced by any model $\SL$ on $\SX$ of $L$ and let $\Fa_{m}$ be the base ideal of $\SL^m$ on $\SX$.  
Then $({m}^{-1}\log|\mathfrak{a}_m|)_{m\in\N_{>0}}$ is a sequence of $\theta$-psh model functions which converges uniformly on $X^\an$ to the envelope $P_\theta(0)$ of the  zero function. 
It follows in particular that $P_\theta(0)$ is continuous.
\end{theo}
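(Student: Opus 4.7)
The plan is to adapt the Boucksom--Favre--Jonsson strategy from \cite{bfj-singular} and its equicharacteristic-$p$ variant in \cite{gubler-jell-kuennemann-martin}, replacing the use of multiplier ideals (resp.\ classical test ideals) by the perturbation friendly global test ideals of Section~\ref{perturbation-friendly-global-test-ideals}. Regularity of $\SX$ makes these available, while the ample model $\mathscr A$ plays the role of the auxiliary polarization needed for effective global generation.

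First, I would check that $\varphi_m\coloneqq m^{-1}\log|\mathfrak{a}_m|$ is a well-defined $\theta$-psh model function with $\varphi_m\leq 0$. Ampleness of $L$ forces $L^m$ to be globally generated on $X$ for $m\gg 0$, so $\mathfrak{a}_m$ restricts to $\KO_X$ on the generic fibre, hence is vertical and $\varphi_m\in\SD(X)$. Writing $\pi_m\colon\SY_m\to\SX$ for the blowup of $\mathfrak{a}_m$ with exceptional divisor $E_m$, the line bundle $\pi_m^{\ast}\SL^m\otimes\KO_{\SY_m}(-E_m)$ is globally generated and hence nef, which yields semipositivity of $m\theta+dd^c\log|\mathfrak{a}_m|$. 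Together with $\mathfrak{a}_m\subset\KO_\SX$ this gives $\varphi_m\leq P_\theta(0)$ for $m\gg 0$.

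Next, multiplication of sections provides $\mathfrak{a}_m\cdot\mathfrak{a}_{m'}\subset\mathfrak{a}_{m+m'}$, so $(m\varphi_m)_m$ is superadditive and Fekete's lemma yields pointwise convergence $\varphi_m\to\varphi_\infty\coloneqq\sup_m\varphi_m\leq P_\theta(0)$ on $X^\an$. To promote this to uniform convergence with limit $P_\theta(0)$, I would establish a reverse estimate $P_\theta(0)\leq \varphi_m+C/m$ for a constant $C=C(\SX,\SL,\mathscr A)$. It suffices to prove $\psi\leq\varphi_m+C/m$ uniformly over all $\theta$-psh model functions $\psi\leq 0$. Passing to a common model and representing $\psi=\ell^{-1}\log|\mathfrak{b}|$ for a vertical ideal $\mathfrak{b}$ on $\SX$ with $\SL^\ell\otimes\mathfrak{b}$ nef, and writing $\SL=\KO_\SX(D)$, the core of the argument is to apply Theorem~\ref{test-ideal-global-generation-and}(i) to the asymptotic perturbation friendly test ideal attached to $\mathfrak{b}^m$ scaled by $\SL^{m\ell}$, with $H$ a very ample twist of $\mathscr A$ and $E$ a bounded $K_\SX$-correction so that the required big-and-nef condition holds. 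The resulting global generation forces this test ideal into a base ideal of the form $\mathfrak{a}_{m\ell+O(1)}$ up to an $\mathscr A$-twist of bounded order, and subadditivity (Theorem~\ref{test-ideal-global-generation-and}(ii)) then compares the test ideal to a power of $\mathfrak{b}$. Translating back to model functions yields the sought inequality $\psi\leq\varphi_{m\ell+O(1)}+O(1/m)$ uniformly in $\psi$.

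The main obstacle is this last inclusion of a test ideal into a base ideal, with an error uniform in $\psi$. Its feasibility in mixed characteristic rests entirely on the subadditivity property of perturbation friendly test ideals, which for the Hacon--Lamarche--Schwede $+$-test ideals is only Conjecture~\ref{hacon-et-al-8-3}; the substitution of $\tau_+$ by $\tau$ of \cite{schwede-etal-2024} is therefore the critical input. One must additionally verify that all auxiliary data (Cartier corrections that turn $\Q$-divisors into effective ones, the dimension shift $n\geq\dim(\SX\otimes_R k)$ in Theorem~\ref{test-ideal-global-generation-and}(i), and the choices of $H$ and $E$) can be absorbed into a single constant depending only on $(\SX,\SL,\mathscr A)$. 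Once this uniform estimate is secured, the preceding bounds combine to give $\sup_{X^\an}|P_\theta(0)-\varphi_m|\to 0$, whence $P_\theta(0)$ is continuous as a uniform limit of the continuous model functions $\varphi_m$.
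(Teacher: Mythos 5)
Your high-level plan is correct and you correctly identify the decisive mixed-characteristic input: subadditivity, available for the perturbation friendly ideals $\tau$ of \cite{schwede-etal-2024} but only conjectural (Conjecture~\ref{hacon-et-al-8-3}) for the $+$-test ideals of Hacon--Lamarche--Schwede. You also correctly recognize that the ample model $\mathscr A$ furnishes the polarization needed for effective global generation. However, your implementation has a concrete gap that the paper's route avoids. You propose to bound each $\theta$-psh $\psi=\ell^{-1}\log|\mathfrak b|$ by forming ``the asymptotic perturbation friendly test ideal attached to $\mathfrak b^m$''. No such object is defined: Section~\ref{perturbation-friendly-global-test-ideals} defines $\tau(\KO_\SX,B)$ for $\Q$-divisors $B$ on $\SX$ and $\tau(\KO_\SX,\lambda\cdot\|D\|)$ for linear series of divisors $D$, not for arbitrary vertical ideals $\mathfrak b$. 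The paper sidesteps this entirely: it never forms a test ideal of the ideal of an individual $\psi$, but works with the single asymptotic ideal $\Fb_m\coloneqq\tau(\KO_\SX,m\cdot\|D\|)$ where $\SL=\KO_\SX(D)$, establishes three properties, namely (a) $\Fa_m'\coloneqq\Fa_m\cdot\tau(\KO_\SX,0)\subset\Fb_m$ with all ideals vertical, (b) $\Fb_{ml}\subset\Fb_m^l$ via Theorem~\ref{test-ideal-global-generation-and}(ii), and (c) global generation of $\Fb_m\otimes\SA^{\otimes m_0}\otimes\SL^{\otimes m}$, and then plugs these into the Step~1/Step~2 machinery of \cite[Theorem 8.5]{bfj-singular}: Step~1 identifies $P_\theta(0)$ pointwise with $\lim_m\varphi_m$, and (a)+(b) yield $\frac1m\log|\Fb_m|\geq P_\theta(0)$ while (c) gives $\frac1m\log|\Fb_m|\leq\varphi_m+O(1/m)$.

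A second subtlety you gloss over: in mixed characteristic $\tau(\KO_\SX,0)$ need not equal $\KO_\SX$ (in contrast to $\mathscr J(\KO_\SX)=\KO_\SX$ for smooth $\SX$ in residue characteristic zero). Lemma~\ref{relate test and base} gives only $\tau(\KO_\SX,|mD|)=\Fa_m\cdot\tau(\KO_\SX,0)$, not $\Fa_m$, which is why the paper introduces the auxiliary ideals $\Fa_m'$ and the functions $\varphi_m'=\varphi_m+\frac1m\log|\tau(\KO_\SX,0)|$; Remark~\ref{more properties new test modules}(v) (verticality of $\tau(\KO_\SX,0)$) then guarantees the correction term vanishes as $m\to\infty$. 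Your estimate $\psi\leq\varphi_m+C/m$ would need this factor absorbed into the constant, but you cannot reach that point without first repairing the undefined test-ideal-of-an-ideal step.
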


\begin{proof}
Consider the graded sequence $(\Fa_{m})_{m>0}$ of base ideals 
\[
\Fa_m=H^0(\SX,\SL^{m})\otimes \SL^{-m}\subset \KO_\SX
\] 
associated with $\SL$. 
Write $\SL=\KO_\SX(D)$ for some divisor $D$ on $\SX$. Let 
\[
\Fb_{m}\coloneqq\tau(\KO_{\SX},m\cdot \|D\|)\subset \KO_\SX 
\] 
denote the associated asymptotic perturbation friendly test ideal of exponent $m$ in mixed characteristic. 
Note that we denote base ideals by $\mathfrak{a}$ and reserve $\mathfrak{b}$ for test ideals, to keep the same notations as in \cite{bfj-singular} and \cite{gubler-jell-kuennemann-martin}. 
Motivated by Lemma \ref{relate test and base ideal} and Remark \ref{more properties new test modules}(v), we consider also the coherent ideals
\[
\Fa_m'\coloneqq\Fa_m\cdot\tau(\KO_\SX,0)\subset\Fa_m \subset \KO_\SX.
\]
These ideals have the following properties:
\begin{enumerate}
\item[(a)]
We have $\Fa_m' \subset \Fb_m$ 
and these coherent ideal sheaves  are vertical for $m$ sufficiently large and divisible.
\item[(b)]
We have $\Fb_{ml} \subset \Fb_m^l$ for all $l,m\in\N_{>0}$.
\item[(c)]
There exists $m_0 \geq 0$ such that $\mathfrak{b}_m\otimes\SA^{\otimes m_0}\otimes\SL^{\otimes m} $ is globally generated for all {$m>0$.}
\end{enumerate}
Indeed these properties can be seen as follows:
by definition of the asymptotic test ideals, we have $\tau(\KO_\SX, |mD|) \subset \Fb_m$. 
For sufficiently large and divisible $m$ we have 
\[
\Fa_m'=\Fa_m\cdot\tau(\KO_\SX,0) = \tau(\KO_\SX, |mD|) \subset \Fb_m
\]
by Lemma \ref{relate test and base}, and the base ideal $\Fa_m$ is vertical as $L$ is ample.
Since $\Fa_m'=\Fa_m\cdot\tau(\KO_\SX,0)$, Remark \ref{more properties new test modules}(v) yields that $\Fa_m'$ is vertical. This proves (a).

Property (b) follows from Theorem \ref{test-ideal-global-generation-and}(ii).
Property (c) is shown as follows.
Choose a divisor $H$ on $\SX$ such that $\KO_\SX(H)$ is ample and globally generated.
We have $n=\mathrm{dim}\,\SX\otimes_Rk$.
As $\SA$ is ample, we can choose $m_0\in \N$ such that the line bundle
\begin{equation}\label{global-gen-eq-1}
\SA^{\otimes m_0}\otimes\KO_\SX\bigl(-K_\SX-(n+1)H\bigr)
\end{equation}
is globally generated.
Given $m\in \N$ we apply Theorem \ref{test-ideal-global-generation-and}(i) to $E\coloneqq mD+H+K_{\SX}$.
Since $E-mD-K_{\SX}=H$ is big and nef, we get that
\begin{equation}\label{global-gen-eq-2}
\mathfrak{b}_m\otimes\KO_\SX(mD+H+K_\SX+n H) 
\end{equation}
is globally generated.
Taking the tensor product of \eqref{global-gen-eq-1} and \eqref{global-gen-eq-2}
we see that $\mathfrak{b}_m\otimes\SA^{\otimes m_0}\otimes\SL^{\otimes m}$ is globally generated.
	
Finally (a), (b) and (c) imply our claim by the strategy of the proof of \cite[Thm.~8.5]{bfj-singular}. 
For convenience of the reader, we give here some details. 
For $m \gg 0$, let 
\[
\varphi_m \coloneqq \frac{1}{m} \log |\Fa_m|
\]
which is a  super-additive sequence of $\theta$-psh functions as shown at the beginning of the proof of \emph{loc.~cit.}. 
Then Step 1 of the quoted proof shows that $P_\theta(0)= \sup_m \varphi_m =\lim_m \varphi_m$ on the quasi-monomial points of $\Xan$. 
Using \cite[Proposition 2.10]{gubler-jell-kuennemann-martin}, this holds pointwise on the whole $\Xan$.
	
We also consider the functions 
\[
\varphi_m' \coloneqq \frac{1}{m}\log |\Fa_m'|= \frac{1}{m}\log|\Fa_m\cdot\tau(\KO_\SX,0)|=\varphi_m +\frac{1}{m}\log|\tau(\KO_\SX,0)|.
\]
It follows from the above that the sequence $\varphi_m'$ also converges pointwise to $P_\theta(0)$ on $\Xan$.

Then Step 2 of \emph{loc.~cit.}~works again in our setting using (a) -- (c) above as follows. 
For $m$ sufficiently large as above and for all $l \in \N_{>0}$, we have seen in (a) and (b) that $\Fa_{ml}' \subset \Fb_{ml} \subset \Fb_m^l$ for all $l \in \N_{>0}$ and hence
\[
\frac{1}{m} \log |\Fb_m| \geq \sup_l \frac{1}{ml} \log|\Fa_{ml}'|=\sup_{l} \varphi_{ml}'
\]
on $\Xan$. 
We conclude that 
\begin{equation} \label{limit inequality}
\frac{1}{m} \log |\Fb_m| \geq \lim_m \varphi_m' =P_\theta(0)
\end{equation}
on $\Xan$. 
The remaining part of the proof of Step 2 is literally the same as in \emph{loc.~cit.} and even simpler as we have Step 1 on $\Xan$ and not only on the quasi-monomial points of $\Xan$. 
Note that we use the global generation property (c) there.
\end{proof}

\section{Resolution of singularities}\label{new-section-6}

Let $K^\circ$ be a complete discrete valuation ring with field of fractions $K$ and $S\coloneqq \Spec\,K^\circ$.
Let $X$ be a smooth projective variety over $K$ of dimension $n$.

\begin{defi} \label{resolution of singularities}
We say that \emph{resolution of singularities holds for projective models of $X$} 
if for every projective model 
$\SX$ of $X$ 
there exists a regular $S$-scheme $\SX'$ and a projective $S$-morphism $\SX'\to \SX$ which induces an isomorphism on $X$.	
\end{defi}

\begin{rem}\label{projective-resolution-gives-good-models}
If one chooses an immersion $X\to \P_K^m$, then the scheme theoretic image $\SX$ of $X$ in $\P^m_S$ defines a projective model of $X$ over $S$. 
If resolution of singularities holds for projective models of $X$, 
then there is a regular projective model $\SX'$ of $X$ over $S$.

 Resolution of singularities holds if $n=1$ \cite[Theorem (1.1)]{artin1986}.
Cossart and Piltant have shown that resolution of singularities holds 
for $n=2$ up to the projectivity of the morphism $\SX'\to \SX$ \cite[Theorem 1.1]{cossart-piltant2019}.
It is only shown in \emph{loc.~cit.}~that this morphism is locally projective.
\end{rem}

It is essential to show that projective models are dominated by SNC-models. 
In order to prove this we are going to use the following assumption.

\begin{defi} \label{embedded resolution of singularities}
We say that {\it embedded  resolution of singularities holds for regular projective models of $X$} 
if for every  regular projective model 
and every proper closed subset $Z$ of $\SX$, there is a projective morphism of $S$-schemes $\pi\colon\SX' \to \SX$ such that the set $\pi^{-1}(Z)$ is the support of a normal crossing divisor and such that $\pi$ is an isomorphism over $\SX \setminus Z$. 
\end{defi}

\begin{lemma} \label{higher geometric model}
Let $\SX$ be  projective model of $X$. 
We assume that resolution of singularities holds for projective models of $X$.
Then for any ample line bundle $L$ on $X$, there exists $m\in \N_{>0}$ and an ample extension $\SL'$ of $L^{\otimes m}$ to a regular $S$-model $\SX'$ of $X$ with a projective morphism $\SX' \to \SX$ over $S$ extending the identity on $X$.
\end{lemma}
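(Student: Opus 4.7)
The plan is to construct $\SX'$ in three stages: produce a projective model of $X$ carrying an ample extension of some power of $L$, dominate the given $\SX$ by a scheme-theoretic closure, and invoke the resolution-of-singularities assumption to reach a regular model, finally correcting a nef pullback by a special-fiber divisor via a negativity argument.

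Concretely, since $L$ is ample I choose $m_1\in\N_{>0}$ such that $L^{\otimes m_1}$ is very ample, giving a closed immersion $\iota\colon X\hookrightarrow \P_K^N$. The scheme-theoretic closure of $\iota(X)$ in $\P_S^N$ is a projective model $\overline{\SX}$ of $X$ on which $\overline{\SL}\coloneqq \KO_{\P_S^N}(1)|_{\overline{\SX}}$ is an ample line bundle extending $L^{\otimes m_1}$. Replacing $\overline{\SX}$ by its normalization if necessary, we may assume $\overline{\SX}$ is normal (the restriction to the smooth $X$ remains an isomorphism, and ampleness is preserved by the finite normalization). Let $\SX_1$ denote the scheme-theoretic closure of $X$ diagonally embedded in $\SX\times_S\overline{\SX}$; the two projections give projective morphisms $\SX_1\to \SX$ and $q_1\colon \SX_1\to \overline{\SX}$, each restricting to the identity on $X$. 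Applying the resolution-of-singularities hypothesis to the projective model $\SX_1$ produces a regular projective model $\SX'$ together with a projective morphism $r\colon \SX'\to \SX_1$ which is an isomorphism on $X$. Composing gives the required projective morphism $\SX'\to \SX$ and a projective morphism $q\coloneqq q_1\circ r\colon \SX'\to \overline{\SX}$, both equal to the identity on $X$.

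It remains to build an ample line bundle $\SL'$ on $\SX'$ extending some positive power of $L$. The pullback $q^*\overline{\SL}$ extends $L^{\otimes m_1}$ and is nef, but may fail to be ample because $q$ can contract curves lying in the special fiber $\SX'_s$. By the negativity lemma, applied to the projective birational morphism $q$ between the normal integral schemes $\SX'$ and $\overline{\SX}$ with $\SX'$ regular, there exists an effective Cartier divisor $E$ on $\SX'$ supported on the exceptional locus of $q$ (which lies in $\SX'_s$, since $q$ is an isomorphism on the generic fiber) such that $\KO_{\SX'}(-E)$ is $q$-ample. The standard relative-ampleness criterion then implies that
\[
\SL'\coloneqq q^*\overline{\SL}^{\otimes k}\otimes \KO_{\SX'}(-E)
\]
is ample on $\SX'$ for all sufficiently large $k\in\N$. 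Since $E$ is supported on $\SX'_s$, we have $\KO_{\SX'}(-E)|_X\cong \KO_X$, and hence $\SL'|_X\cong L^{\otimes km_1}$. Setting $m\coloneqq km_1$ finishes the construction.

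The main obstacle I anticipate is the invocation of the negativity lemma producing the effective $q$-ample exceptional divisor $-E$. Should a direct reference be inconvenient, one may instead start from a $q$-very ample line bundle $\SN$ on $\SX'$ (whose existence is built into the definition of projectivity of $q$), write $\SN\cong \KO_{\SX'}(D)$ using regularity of $\SX'$, and adjust $D$ by the pullback from $\overline{\SX}$ of a suitable Cartier divisor so that the resulting Cartier divisor on $\SX'$ is of the form $-E$ with $E$ effective and supported on $\SX'_s$.
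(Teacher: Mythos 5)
Your overall plan parallels the paper's (dominate the given model, resolve, then correct the relatively ample bundle so it restricts to a power of $L$), but the final correction step has a genuine gap.

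The step that fails is the production of an effective vertical Cartier divisor $E$ on $\SX'$ with $\KO_{\SX'}(-E)$ $q$-ample. The negativity lemma does not yield this. What is true is that $q$, being projective and birational, is a blow-up of a coherent ideal sheaf $\SI$ on $\overline{\SX}$, and $\SI\cdot\KO_{\SX'}=\KO_{\SX'}(-D)$ is $q$-ample with $D$ effective; but $D$ is supported on $q^{-1}(V(\SI))$, which can have horizontal components even though the exceptional locus of $q$ is vertical (for instance, $\SI$ may have horizontal support where it is merely locally principal). Consequently $D|_X$ may be nontrivial, and nothing in your construction guarantees you can remove the horizontal part. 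The standard fix — replace $D$ by $D-q^*q_*D$ and then apply negativity to conclude $D-q^*q_*D\leq 0$ — requires $q_*D$ to be $\Q$-Cartier, i.e.\ $\overline{\SX}$ to be $\Q$-factorial, which your $\overline{\SX}$ (the normalized closure in $\P_S^N$) is not in general. Your fallback argument has the same gap dressed differently: to ``adjust $D$ by the pullback from $\overline{\SX}$ of a suitable Cartier divisor'' so that the result is vertical, you would need the restriction $D|_X$ to extend to a Cartier divisor on $\overline{\SX}$, i.e.\ you need the restriction map $\operatorname{Pic}(\overline{\SX})\to\operatorname{Pic}(X)$ to hit this class — precisely the semi-factoriality of $\overline{\SX}$, which you have not arranged. (Also note you do not actually need $E$ effective; the essential point is only that the correcting line bundle on $\SX'$ be a pullback from $\overline{\SX}$, so that $q$-ampleness is preserved and the restriction to $X$ is trivial.)

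This is exactly the subtlety the paper confronts: after resolution, $\SO_{\SX'/\SX}(1)$ may restrict to a nontrivial line bundle $F$ on $X$. The paper's remedy, before invoking resolution, is to apply P\'epin's theorem to obtain a \emph{semi-factorial} projective model via a blow-up centered in the special fiber (which preserves the ample extension, by the same argument with $\SO_{\mathscr Z'/\SX}(1)$), so that $F$ extends to a line bundle $\SF$ on the base and one can twist $\SO_{\SX'/\SX}(1)$ by $\pi^*\SF^{-1}$. Without inserting a semi-factorialization (or otherwise ensuring $\Q$-factoriality of the target of $q$), your argument does not close. The rest of your proof — embedding $X$ via a very ample power of $L$, taking closures, dominating both models by the closure of the diagonal, resolving — is sound and essentially parallels the paper's use of L\"utkebohmert's lemma for the domination step.
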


\begin{proof}
The arguments are the same as for \cite[Lemma 7.5]{gubler-jell-kuennemann-martin}. 
In a first step, we start with a projective model $\SY$ of $X$ such that $L$ extends to an ample line bundle $\SH$ on $\SY$, possibly replacing $L$ by a positive tensor power. 
By a result of L\"utkebohmert \cite[Lemma 2.2]{luetkebohmert1993}, there is a blowing up $\pi\colon\mathscr Z \to \SY$ in an ideal sheaf  supported in the special fiber of $\SY$  such that the identity on $X$ extends to a morphism $\mathscr Z \to \SX$. 
A property of blowing ups \cite[Proposition II.7.13]{hartshorne-book} shows that $\pi^*(\SH^{\otimes \ell}) \otimes \SO_{\mathscr Z/\SY}(1)$ is an ample line bundle on $\mathscr Z$ with generic fiber $L^{\otimes \ell}$ for sufficiently large $\ell$. 
Replacing $\SX$ by  $\mathscr Z$  and $L$ by $L^{\otimes \ell}$, we conclude that  we may assume that $L$ extends to an ample line bundle $\SH$ on $\SX$. 
	
By a result of P\'epin \cite[Thm.~3.1]{pepin2013}, there is a a blowing-up morphism $\pi' \colon \mathscr Z' \to \SX$ centered in the special fiber of $\SX$ such that $\mathscr Z'$ is semi-factorial. 
So similarly as in the first step, replacing $\SX$ by $\mathscr Z'$ and $L$ by a positive tensor power, we may assume that $L$ extends to an ample line bundle $\SH$ on a semi-factorial projective model $\SX$. 
This is the conclusion of the second step.

Then we apply resolution of singularities to $\SX$ to get a  projective $S$-morphism $\pi \colon \SX' \to \SX$ 
which is an isomorphism on generic fibers and so we may identify the generic fiber of $\SX'$ with $X$. 
Since $\pi$ is projective, there is an $\ell>0$ such that $\SL'\coloneqq \pi^*(\SH^{\otimes \ell}) \otimes \SO_{\SX'/\SX}(1)$ is an ample line bundle on $\SX'$. 
However, we do not know if $\pi$ is a blow up in an ideal supported in the special fiber and hence the restriction $F$ of $ \SO_{\SX'/\SX}(1)$  to the generic fiber $X$ of $\SX'$ might be non-trivial. 
Since $\SX$ is semi-factorial, $F$ extends to a line bundle $\SF$ on $\SX$ and we can replace $ \SO_{\SX'/\SX}(1)$ by $ \SO_{\SX'/\SX}(1) \otimes \varphi^*(\SF^{-1})$. 
Then $\SL'$ is a model of $L^{\otimes \ell}$ proving the claim. 
\end{proof}

\begin{rem} \label{embedded reso of sing gives ample extension to SNC}
If we assume additionally that embedded resolution of singularities holds for regular projective models of $X$, then we can choose $\SX'$ as an SNC-model in Lemma \ref{higher geometric model}.  Indeed, we replace $\SX'$ in the above proof by applying embedded resolution of singularities to the closed subset $\SX_s'$ of the regular projective model $\SX'$ to get an SNC-model of $X$.
\end{rem}

\section{Continuity of the envelope}

Let $K$ be a complete discretely valued field of mixed characteristic $(0,p)$ and $S\coloneqq \Spec ~K^\circ$.
Let $L$ be an ample line bundle on a regular projective variety $X$ of dimension $n$ over $K$.
Let $\theta$ be a closed $(1,1)$-form on $X$ with ample de Rham
class $\{\theta\}$ induced by a model $\SL$ of $L$ on a model $\SX$ of $X$.

\begin{theo}\label{continuity-of-the-envelope}
Assume that resolution of singularities holds   
for projective models of $X$.
If $u\in C^0(X^\an)$, then $P_\theta(u)$ is a uniform limit of $\theta$-psh model functions and thus $P_\theta(u)$ is continuous on $X^\an$.
\end{theo}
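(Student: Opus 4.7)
The plan is to reduce the claim to Theorem \ref{relative Theorem 8.5} via the two standard bootstrap steps of Boucksom--Favre--Jonsson: first a Lipschitz/density reduction to model functions, then a translation identity that rewrites $P_\theta(v)$ as an envelope of the zero function for a shifted form, at which point Lemma \ref{higher geometric model} lets us realize the shifted form on a regular model with an ample extension.

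First I would reduce to the case in which $u$ is a model function. Using the Lipschitz estimate \eqref{proposition-2.9-5} together with the (standard) uniform density of $\SD(X)$ in $C^0(\Xan)$, I choose $v_k\in \SD(X)$ with $v_k\to u$ uniformly; \eqref{proposition-2.9-5} gives $P_\theta(v_k)\to P_\theta(u)$ uniformly. A diagonal argument then shows that if each $P_\theta(v_k)$ is a uniform limit of $\theta$-psh model functions, so is $P_\theta(u)$, and continuity of $P_\theta(u)$ follows. It therefore suffices to treat $u=v\in \SD(X)$.

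For $v\in\SD(X)$, I would apply \eqref{proposition-2.9-4} with $u$ replaced by $v$ to obtain
\[
P_\theta(v) \;=\; v + P_{\theta_1}(0), \qquad \theta_1 \coloneqq \theta + dd^c v.
\]
Since $v$ is a model function, $dd^c v$ has trivial de Rham class, so $\{\theta_1\}=\{\theta\}$ is still ample. Passing to a common refinement of the models inducing $\theta$ and $v$, I may assume that $\theta_1$ is induced by a line bundle $\SL_1$ of generic fiber $L$ on some projective model $\SX_1$ of $X$. Now I would apply Lemma \ref{higher geometric model} to the pair $(\SX_1,L)$: resolution of singularities for projective models of $X$ (together with the results of L\"utkebohmert and P\'epin used in its proof) yields $m\in\N_{>0}$, a regular projective model $\SX'$ of $X$ with a projective $S$-morphism $\pi\colon \SX'\to \SX_1$ that is the identity on the generic fibre, and an ample line bundle $\SA'$ on $\SX'$ extending $L^{\otimes m}$. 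The pullback $\pi^*\SL_1^{\otimes m}$ is another model of $L^{\otimes m}$ on the regular scheme $\SX'$, and it induces the curvature form $m\theta_1$. Hence Theorem \ref{relative Theorem 8.5} applies to the regular model $\SX'$, the ample extension $\SA'$, and the model metric coming from $\pi^*\SL_1^{\otimes m}$: the base-ideal functions $\tfrac{1}{k}\log|\Fa_k|$ are $m\theta_1$-psh model functions converging uniformly to $P_{m\theta_1}(0)$. Dividing by $m$ and using the homogeneity \eqref{proposition-2.9-7}, the sequence $\tfrac{1}{mk}\log|\Fa_k|$ consists of $\theta_1$-psh model functions and converges uniformly to $P_{\theta_1}(0)$. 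Adding the model function $v$ transforms $\theta_1$-psh functions into $\theta$-psh functions, and produces a sequence of $\theta$-psh model functions converging uniformly to $v+P_{\theta_1}(0)=P_\theta(v)$, as required.

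The main conceptual obstacle is that $\theta_1=\theta+dd^cv$ will in general only be represented by a line bundle on a possibly singular model $\SX_1$, whereas Theorem \ref{relative Theorem 8.5} simultaneously demands a \emph{regular} model \emph{and} an ample extension of (some power of) $L$ on that same model. Bridging this gap is exactly what Lemma \ref{higher geometric model} is designed for, and it is the unique place where the resolution-of-singularities hypothesis enters the argument; once this geometric input is in hand, the rest of the proof is purely formal bookkeeping with the translation identity \eqref{proposition-2.9-4}, the homogeneity \eqref{proposition-2.9-7}, and the Lipschitz estimate \eqref{proposition-2.9-5} for the envelope operator.
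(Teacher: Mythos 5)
Your proof matches the paper's own argument essentially step for step: reduce to a model function via the Lipschitz estimate \eqref{proposition-2.9-5}, apply the translation identity \eqref{proposition-2.9-4}, use Lemma~\ref{higher geometric model} (powered by resolution of singularities) to realize the shifted form on a regular projective model carrying an ample extension of a power of $L$, and then invoke Theorem~\ref{relative Theorem 8.5} together with the homogeneity \eqref{proposition-2.9-7}. The only place you pass quickly is the claim that a common refinement of the models underlying $\theta$ and $v$ may be taken to be \emph{projective} and to carry an honest (not merely $\mathbb Q$-)line bundle realizing $\theta_1$; the paper justifies this by first scaling $(\theta,v)$ so that the vertical $\mathbb Q$-divisor defining $v$ becomes integral, and then invoking L\"utkebohmert's lemma \cite[Lemma 2.2]{luetkebohmert1993} to dominate by a projective model, both of which slot harmlessly into your argument.
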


\begin{proof}
Using that $u$ is a uniform limit of model functions on $X$, we may assume
that $u$ is itself a model function by \cite[Proposition 2.9(v)]{gubler-jell-kuennemann-martin}.
The model function $u$ is defined by a vertical $\Q$-divisor on a proper model $\SX$ of $X$ over $S$.
By \cite[Proposition 2.9(7)]{gubler-jell-kuennemann-martin},
we may replace $(\theta,u)$ for the proof by $(m\theta,m u)$ for some $m\in \N$. 
Hence we may assume without loss of generality that $u$ is actually defined by a vertical divisor on $\SX$.
If we apply \cite[Lemma 2.2]{luetkebohmert1993} to $\SX$ and a projective model of $X$ as in Remark \ref{projective-resolution-gives-good-models}, then $\SX$ is dominated by a projective model of $X$.
Hence we can assume without loss of generality that $\SX$ is a projective model.
Then we choose a resolution of singularities $\SX'\to \SX$ as in Lemma \ref{higher geometric model} such that some power $L^{\otimes m}$ of  $L$ extends to an ample line bundle $\SA$ on the regular projective model $\SX'$ of $X$.
As before we may assume that $m=1$.
By \cite[Proposition 2.9(4)]{gubler-jell-kuennemann-martin}, we get
\[
P_\theta(u)=P_{\theta+dd^cu}(0)+u.
\]
By construction the class $\theta+dd^cu$ is induced by a line bundle $\SL$ on $\SX'$
whose restriction to $X$ is isomorphic to $L$ and Theorem \ref{relative Theorem 8.5} shows that $P_{\theta+dd^cu}(0)$ is a uniform limit of $(\theta+dd^cu)$-psh model functions.
This finishes our proof.
\end{proof}

\section{The Monge--Amp\`ere equation} \label{section:MA-equation}

Let $K$ be a complete discretely valued field with valuation ring $R$ of mixed characteristic $(0,p)$. 
In this section, we consider a projective regular 
variety $X$ over $K$. 
We assume that resolution of singularities holds for projective models of $X$ and embedded resolution of singularities holds for regular projective models of $X$.
It follows that every projective model of $X$ is dominated by a projective SNC-model. 
By Lemma \ref{higher geometric model} and Remark \ref{embedded reso of sing gives ample extension to SNC}, any ample line bundle on $X$ extends to an ample line bundle on a suitable dominating SNC model after possibly passing to a positive tensor power. 
As explained in \cite[Section 9]{gubler-jell-kuennemann-martin}, these assumptions are enough to set up a pluripotential theory for $\theta$-psh functions with respect to a closed $(1,1)$-form $\theta$ on $\Xan$ with ample de Rham class $\{\theta\}$. All the results of \cite[Sections 1--7]{bfj-singular} hold. 
If we assume continuity of the envelope for ample line bundles on $X$, then monotone regularization \cite[Theorem 8.7]{bfj-singular} holds as well in our setting which is crucial to extend the Monge--Amp\`ere measure $\MA_\theta(\varphi)$ from $\theta$-psh model functions to bounded $\theta$-psh functions on $\Xan$. Then the results from \cite[Sections 4--6]{bfj-singular} hold in our setting by the same arguments. 
 
\begin{theo} \label{Calabi-Yau theorem}
Let $X$ be a smooth projective variety over $K$ 
of dimension $n$ 
and let $\theta$ be a closed $(1,1)$-form on $\Xan$ with ample de Rham class $\{\theta\}$ such that resolution of singularities holds for projective models of $X$ and embedded resolution of singularities holds for regular projective models of $X$.
We consider a positive Radon measure   $\mu$ on $X^\an$ of total mass $\{\theta\}^n$ which is supported on the skeleton of a {projective} SNC-model of $X$. 
Then there is a continuous $\theta$-psh function $\varphi$ on $\Xan$ such that $\MA_\theta(\varphi)=\mu$ and $\varphi$ is unique up to additive constants.
\end{theo}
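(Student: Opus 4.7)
The plan is to deduce the theorem by running the variational method of Boucksom--Favre--Jonsson \cite{bfj-solution,bfj-singular}, using our Theorem~\ref{continuity-of-the-envelope} as the sole nontrivial mixed-characteristic input. Uniqueness of $\varphi$ up to additive constants is already supplied in full generality by Yuan--Zhang \cite{yuan-zhang}, so the real content is existence.

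For existence, the resolution hypotheses together with Lemma~\ref{higher geometric model} and Remark~\ref{embedded reso of sing gives ample extension to SNC} ensure that every projective model of $X$ is dominated by a projective SNC-model on which a given ample class is realized, after passing to a suitable positive tensor power. Combined with the continuity of the envelope $P_\theta(u)$ for $u\in C^0(\Xan)$ granted by Theorem~\ref{continuity-of-the-envelope}, this is precisely the package of hypotheses under which \cite[Sections~1--7]{bfj-singular} carry over verbatim. In particular, monotone regularization \cite[Theorem~8.7]{bfj-singular} becomes available, allowing us to extend $\MA_\theta$ from $\theta$-psh model functions first to bounded, then to finite-energy $\theta$-psh functions in the space $\mathcal{E}^1(\Xan,\theta)$ of \cite[Section~3]{bfj-solution}. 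On this space the BFJ energy $E_\theta$ is defined and one forms the Mabuchi-type functional
\[
F_\mu(\varphi)\coloneqq E_\theta(\varphi)-\int_{\Xan}\varphi\,d\mu.
\]
The hypothesis that $\mu$ is supported on the skeleton of a projective SNC-model places $\mu$ in the class of measures of finite energy, so $F_\mu$ is bounded above on the set of sup-normalized finite-energy $\theta$-psh functions. Weak compactness of this set yields a maximizer $\varphi_\star$; the Euler--Lagrange calculation, invoking differentiability of $E_\theta$ along psh envelopes, then gives $\MA_\theta(\varphi_\star)=\mu$; and continuity of $\varphi_\star$ follows from an envelope-approximation argument using the support condition on $\mu$ together with Theorem~\ref{continuity-of-the-envelope}, exactly as in \cite[Theorems~A and~B]{bfj-solution}.

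The main technical obstacle is to verify that none of the many steps in the BFJ variational program --- orthogonality, energy differentiability, the compactness and continuity properties of the extended $\MA_\theta$ on $\mathcal{E}^1$, and the various envelope estimates --- tacitly depends on residue characteristic zero beyond the continuity of the semipositive envelope of a continuous metric on an ample line bundle. Once this verification is carried out (as the discussion immediately preceding the theorem asserts), the argument of \cite{bfj-solution} transfers without change, and the genuinely new ingredient is Theorem~\ref{continuity-of-the-envelope}, which itself rests on the perturbation friendly global test ideals introduced in Section~\ref{perturbation-friendly-global-test-ideals} and replaces the use of multiplier ideals of the equicharacteristic zero treatment.
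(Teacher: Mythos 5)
Your approach matches the paper's overall strategy: uniqueness via Yuan--Zhang, existence via the BFJ variational method with Theorem~\ref{continuity-of-the-envelope} supplying the key mixed-characteristic input (continuity of the psh envelope). The one substantive gap concerns the orthogonality property. You list orthogonality among ``the many steps in the BFJ variational program'' that need verification and defer this to ``the discussion immediately preceding the theorem.'' But that discussion only establishes that the results of \cite[Sections 1--7 and 4--6]{bfj-singular} transfer to the mixed-characteristic setting; it does not address orthogonality. In the original argument of \cite{bfj-solution}, orthogonality is obtained via multiplier ideals, which do not exist in mixed characteristic, so its availability here genuinely requires a separate argument. The paper closes this gap by invoking \cite[Theorems 6.3.2, 6.3.3]{BGJKM}, which derive the orthogonality property from continuity of the envelope via differentiability of the non-archimedean volume, in a way independent of the residue characteristic. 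Without that reference (or an equivalent argument), the Euler--Lagrange step $\MA_\theta(\varphi)=\mu$ in your proposal does not follow, since the ``differentiability of $E_\theta$ along psh envelopes'' that you invoke is exactly the point at stake.
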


\begin{proof}
Uniqueness follows from a result of Yuan and Zhang, see \cite[\S 8.1]{bfj-solution}. 
To prove existence of a $\theta$-psh solution $\varphi$, the variational method of Boucksom, Favre, and Jonsson is used. 
Continuity of the envelope for ample line bundles on $X$ holds by Theorem \ref{continuity-of-the-envelope}.
By \cite[Theorems 6.3.2, 6.3.3]{BGJKM}, we conclude that $\theta$ satisfies the crucial \emph{orthogonality property} (see \cite[Definition 7.1]{bfj-solution} for the definition). 
Then existence of a continuous solution follows from \cite[Theorem 8.1]{bfj-solution}.
\end{proof}

\begin{rem} \label{dictionary}
If $L$ is an ample line bundle on $X$ and if $\theta$ is induced by a model $(\SX,\SL)$ of $(X,L)$, then there is a bijective correspondence \cite[\S 2.6]{bfj-solution}
\begin{equation}
\{\theta\mbox{-psh functions}\}\longleftrightarrow\{\mbox{semipositive metrics of } L\},\,\,\,
\varphi\longmapsto \metr_\SL \cdot e^{-\varphi}.
\end{equation}
It follows that Theorem \ref{relative Theorem 8.5} and Theorem \ref{continuity-of-the-envelope} imply Theorem \ref{intro: continuity of the envelope} and that Theorem \ref{Calabi-Yau theorem} implies Theorem \ref{into Calabi-Yau theorem}.
\end{rem}

\addtocontents{toc}{\protect\setcounter{tocdepth}{0}}
\section*{{Acknowledgements}}
\addtocontents{toc}{\protect\setcounter{tocdepth}{1}}
We are grateful to Bhargav Bhatt, Huayi Chen and Mattias Jonsson for remarks about previous versions of this paper.
We are especially grateful to Karl Schwede for discussions and help about subadditivity of global $+$-test ideals and  perturbation friendly global test ideals.
We are grateful to the referee for helpful remarks and suggestions.

\bibliographystyle{alpha}


\begin{thebibliography}{BGGJK21}

\bibitem[Art86]{artin1986}
Michael Artin.
\newblock Lipman's proof of resolution of singularities for surfaces.
\newblock In {\em Arithmetic geometry ({S}torrs, {C}onn., 1984)}, pages
  267--287. Springer, New York, 1986.
  
\bibitem[Bha20]{bhatt2020}
Bhargav Bhatt.
\newblock Cohen--{M}acaulayness of absolute integral closures.
\newblock \href{https://arxiv.org/abs/2008.08070}{\tt arXiv:2008.08070}, 2020.

\bibitem[BMP{\etalchar{+}}21]{bmpstww2021}
Bhargav Bhatt, Linquan Ma, Zsolt Patakfalvi, Karl Schwede, Kevin Tucker, Joe
  Waldron, and Jakub Witaszek.
\newblock Globally +regular varieties and the minimal model program for
  threefolds in mixed characteristic.
\newblock {\em Publ. Math. Inst. Hautes Études Sci.} 138: 69–227, 2023.  
  
  
 \bibitem[BMP{\etalchar{+}}24]{schwede-etal-2024}
Bhargav Bhatt, Linquan~Ma, Zsolt Patakfalvi, Karl Schwede, Kevin Tucker, Joe
  Waldron, and Jakub Witaszek.
\newblock Test ideals in mixed characteristic: a unified theory up to
  perturbation.
\newblock \href{https://arxiv.org/abs/2401.00615}{ \tt arXiv:2401.00615}, 2024.
 
  

\bibitem[BFJ15]{bfj-solution}
S{\'e}bastien Boucksom, Charles Favre, and Mattias Jonsson.
\newblock Solution to a non-{A}rchimedean {M}onge-{A}mp\`ere equation.
\newblock {\em J. Amer. Math. Soc.}, 28(3):617--667, 2015.

\bibitem[BFJ16]{bfj-singular}
S{\'e}bastien Boucksom, Charles Favre, and Mattias Jonsson.
\newblock Singular semipositive metrics in non-{A}rchimedean geometry.
\newblock {\em J. Algebraic Geom.}, 25(1):77--139, 2016.

\bibitem[BGJK21]{burgos-gubler-jell-kuennemann2021}
José~Ignacio Burgos~Gil, Walter Gubler, Philipp Jell, and Klaus Künnemann.
\newblock Pluripotential theory for tropical toric varieties and
non-Archimedean {M}onge--{A}mpère equations.
\newblock \href{http://arxiv.org/abs/2102.07392}{\tt arXiv:2102.07392}, 2021.
To appear: Kyoto J. Math. 

\bibitem[BGJ{\etalchar{+}}20]{BGJKM}
José~Ignacio {Burgos Gil}, Walter {Gubler}, Philipp {Jell}, Klaus {
  K\"unnemann}, and Florent {Martin}.
\newblock {Differentiability of non-Archimedean volumes and non-Archimedean
  {M}onge--{A}mp\`ere equations (with an appendix by Robert Lazarsfeld)}.
\newblock {\em Algebraic Geometry}, 7(2):113--152, 2020.

\bibitem[CP19]{cossart-piltant2019}
Vincent Cossart and Olivier Piltant.
\newblock Resolution of singularities of arithmetical threefolds.
\newblock {\em J. Algebra}, 529:268--535, 2019.

\bibitem[GJKM19]{gubler-jell-kuennemann-martin}
Walter Gubler, Philipp Jell, Klaus K\"{u}nnemann, and Florent Martin.
\newblock Continuity of plurisubharmonic envelopes in non-Archimedean geometry
  and test ideals.
\newblock {\em Ann. Inst. Fourier (Grenoble)}, 69(5):2331--2376, 2019.
\newblock With an appendix by Jos\'{e} Ignacio Burgos Gil and Mart\'{\i}n
  Sombra.

\bibitem[Har77]{hartshorne-book}
Robin Hartshorne.
\newblock {\em Algebraic geometry}.
\newblock Springer-Verlag, New York-Heidelberg, 1977.
\newblock Graduate Texts in Mathematics, No. 52.

\bibitem[Har94]{hartshorne1994}
Robin Hartshorne.
\newblock Generalized divisors on {G}orenstein schemes.
\newblock In {\em Proceedings of {C}onference on {A}lgebraic {G}eometry and
  {R}ing {T}heory in honor of {M}ichael {A}rtin, {P}art {III} ({A}ntwerp,
  1992)}, volume~8, pages 287--339, 1994.

\bibitem[HLS21]{hls2021}
Christopher Hacon, Alicia Lamarche, and Karl Schwede.
\newblock Global generation of test ideals in mixed characteristic and
  applications.
\newblock \href{https://arxiv.org/abs/2106.14329}{ \tt arXiv:2106.14329}, 2021.

\bibitem[Ko{\l}98]{kol98:_monge_amper}
S{\l}awomir Ko{\l}odziej.
\newblock The complex {M}onge--{A}mp\`ere equation.
\newblock {\em Acta Math.}, 180:69 -- 117, 1998.


\bibitem[Liu11]{liu2011}
Yifeng Liu.
\newblock A non-{A}rchimedean analogue of the {C}alabi-{Y}au theorem for
  totally degenerate abelian varieties.
\newblock {\em J. Differential Geom.}, 89(1):87--110, 2011.

\bibitem[L{\"u}t93]{luetkebohmert1993}
Werner L{\"u}tkebohmert.
\newblock On compactification of schemes.
\newblock {\em Manuscripta Math.}, 80(1):95--111, 1993.

\bibitem[MS18]{ma-schwede2018a}
Linquan Ma and Karl Schwede.
\newblock Perfectoid multiplier/test ideals in regular rings and bounds on
  symbolic powers.
\newblock {\em Invent. Math.}, 214(2):913--955, 2018.

\bibitem[MS21]{ma-schwede2018b}
Linquan Ma and Karl Schwede.
\newblock Singularities in mixed characteristic via perfectoid big
  {C}ohen-{M}acaulay algebras.
\newblock {\em Duke Math. J.}, 170(13):2815--2890, 2021.

\bibitem[P{\'e}p13]{pepin2013}
C{\'e}dric P{\'e}pin.
\newblock Mod\`eles semi-factoriels et mod\`eles de {N}\'eron.
\newblock {\em Math. Ann.}, 355(1):147--185, 2013.

\bibitem[{Sta}21]{stacks-project}
The {Stacks Project Authors}.
\newblock {\textit{Stacks Project}}.
\newblock \url{stacks.math.columbia.edu}, 2021.

\bibitem[TY21]{ty2021}
Teppei Takamatsu and Shou Yoshikawa.
\newblock Minimal model program for semi-stable threefolds in mixed
  characteristic.
\newblock {\em J. Algebraic Geom.}, 32(3):429--476, 2023.






\bibitem[Thu05]{thuillier-thesis}
Amaury Thuillier.
\newblock {\em Théorie du potentiel sur les courbes en géométrie analytique
  non archimédienne. Applications à la théorie d'Arakelov.}
\newblock PhD thesis, Université de Rennes, 2005.
\newblock
  \href{http://tel.archives-ouvertes.fr/docs/00/04/87/50/PDF/tel-00010990.pdf}{\tt
  https://tel.archives-ouvertes.fr/tel-00010990}.

\bibitem[YZ17]{yuan-zhang}
Xinyi Yuan and Shou-Wu Zhang.
\newblock The arithmetic {H}odge index theorem for adelic line bundles.
\newblock {\em Math. Ann.}, 367(3-4):1123--1171, 2017.

\end{thebibliography}

\newcommand{\etalchar}[1]{$^{#1}$}

\end{document}